\documentclass[11pt,a4paper]{article}
\usepackage[top=3cm, bottom=3cm, left=3cm, right=3cm]{geometry}
\usepackage[latin1]{inputenc}
\usepackage{amsmath}
\usepackage{amsthm}
\usepackage{amsfonts}
\usepackage{amssymb}
\usepackage{graphicx}
\usepackage{enumerate}
\usepackage{enumitem}
\usepackage{hyperref}


\parindent 0pt


\newtheorem{thm}{Theorem}[section]
\newtheorem{lem}{Lemma}[section]
\newtheorem{conj}{Conjecture}[section]
\newtheorem{exa}{Example}[section]


\parindent 0pt

\title{Twin Primes In Quadratic Arithmetic Progressions}
\date{}
\author{N. A. Carella}

\usepackage{lipsum}
\usepackage{fancyhdr}
\pagestyle{fancy}
\lhead{}
\rhead{\textsc{twin primes in quadratic progressions} }

\begin{document}
\thispagestyle{empty}
	
\maketitle

\begin{abstract} 
A recent heuristic argument based on basic concepts in spectral analysis showed that the twin prime conjecture and a few other related primes counting problems are valid. A rigorous version of the spectral method, and a proof for the existence of infinitely many quadratic twin primes $n^{2}+1$ and $n^{2}+3$, $n \geq 1$, are proposed in this note.
\end{abstract}

\tableofcontents
\vskip .25 in
\text{\textit{Mathematics Subject Classifications}: 11A41, 11N05, 11N32.}
\vskip .15 in
\text{\textit{Keywords}: Distribution of Primes, Twin Primes Conjecture, Quadratic Primes. }
\vskip .25 in

\newpage
\section{Introduction} \label{s1}
A heuristic argument in favor of the existence of infinitely many twin primes numbers $p$ and $q = p + 2$ was devised in \cite{GP99}, and generalized to some related linear prime pairs $p$ and $ap + bq = c$ counting problems, where $a, b, c \in \mathbb{Z}$ are integers in \cite{GP06}. The heuristic argument is based on basic spectral techniques, as Ramanujan series, and the Wiener-Khintchine formula, applied to the additive correlation function $\sum_{n \leq x} \Lambda(n)\Lambda(n+2)$. Extensive background information are available in \cite{RP96}, \cite{NW00}, \cite{GD09}, \cite{PJ09}, et cetera.\\

This work proposes a rigorous version and a proof for the quadratic twin primes $n^2+1$, and $n^2+3, n\geq 1,$ counting problem. The basic idea combines elements of spectral analysis and complex functions theory to derive a rigorous proof applied to the correlation function $\sum_{n^{2}+1\leq x}\Lambda(n^{2}+1)\Lambda(n^{2}+3)$, see \cite[p. 46]{HL23}, \cite[p. 343]{NW00}, \cite[p. 406]{RP96}, et alii. The main result is as follows. \\

\begin{thm} \label{thm1.1} Let $x\geq 1$ be a large real number. Then
\begin{equation}
\sum_{n^{2}+1\leq x}\Lambda(n^{2}+1)\Lambda(n^{2}+3) =  \alpha_{2} x^{1/2}+O\left (x^{1/2} \frac{(\log \log x)^2}{\log x} \right ),
\end{equation}
for some $\alpha_{2} > 1/2$ constant.                                            
\end{thm}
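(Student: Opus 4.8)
The plan is to realize the correlation sum as the zero-shift cross-correlation of the two arithmetic functions $n\mapsto\Lambda(n^2+1)$ and $n\mapsto\Lambda(n^2+3)$, and to expand each in a Ramanujan--Fourier series in the spirit of the spectral treatment of the linear twin prime problem in \cite{GP99}. Writing $c_q(n)=\sum_{\substack{a \bmod q\\ \gcd(a,q)=1}} e^{2\pi i a n/q}$ for the Ramanujan sum, I would seek expansions
\[
\Lambda(n^2+1) \sim \sum_{q\geq 1} a_q\, c_q(n), \qquad \Lambda(n^2+3)\sim \sum_{q\geq 1} b_q\, c_q(n),
\]
and then invoke the orthogonality relation $\frac1N\sum_{n\le N} c_q(n)c_{q'}(n)\to \phi(q)\,\delta_{q,q'}$ to obtain, formally,
\[
\frac{1}{\sqrt{x}}\sum_{n\leq\sqrt{x-1}}\Lambda(n^2+1)\Lambda(n^2+3) \longrightarrow \sum_{q\geq1} a_q b_q\, \phi(q) =: \alpha_2 .
\]
Here the difference between the shifts $+1$ and $+3$ plays the role that the additive shift $h=2$ plays in the linear case, and the rigorous version replaces the limit by a finite truncation at $N=\sqrt{x-1}$ while tracking every error. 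This reduces the theorem to (i) computing the coefficients $a_q,b_q$, (ii) summing the resulting series, and (iii) controlling the truncation and prime-counting remainders.

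For the coefficients one inverts the expansion: the Ramanujan coefficient is the normalized mean value
\[
a_q = \frac{1}{\phi(q)}\lim_{N\to\infty}\frac1N\sum_{n\leq N}\Lambda(n^2+1)\,c_q(n).
\]
Expanding $c_q(n)$ into additive characters reduces this to the behaviour of $\sum_{n\leq N,\, n\equiv r\!\!\pmod q}\Lambda(n^2+1)$, i.e. the equidistribution of the primes $n^2+1$ among the residue classes of $n$ modulo $q$. Granting the appropriate asymptotic, this mean value is governed by the local root counts $\varrho_1(q)=\#\{\,r \bmod q : q\mid r^2+1\,\}$ and their analogue for $r^2+3$, so that $a_q,b_q$ are multiplicative in $q$. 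Reassembling $a_qb_q\phi(q)$ into an Euler product should give
\[
\alpha_2 = \prod_p \left(1-\frac{\nu(p)}{p}\right)\left(1-\frac1p\right)^{-2}, \qquad \nu(p)=\#\{\, r \bmod p : p\mid(r^2+1)(r^2+3)\,\},
\]
where one separates the factor $p=2$ and uses $\nu(p)\in\{0,2,4\}$ according to the quadratic characters of $-1$ and $-3$ modulo $p$. The product converges since $\nu(p)=2$ on average, and I would estimate the partial products numerically to secure the positivity $\alpha_2>1/2$.

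To reach the stated error $O(x^{1/2}(\log\log x)^2/\log x)$ I would truncate the Ramanujan expansion at a threshold $Q$, bound the tail $\sum_{q>Q}|a_qb_q|\phi(q)$ by the decay of the singular-series factors, and insert the equidistribution asymptotic for the remaining $q\le Q$; the $1/\log x$ reflects the relative quality of the prime count of $n^2+1$, while the two factors $(\log\log x)^2$ track the contribution of the small prime factors summed over $q\le Q$. The \emph{genuine} obstacle lies one level below all of this: every step that evaluates $a_q$, and that bounds the main-term remainder, demands an asymptotic for $\sum_{n\leq\sqrt{x},\, n\equiv r\!\!\pmod q}\Lambda(n^2+1)$ with at least a $\log$-power saving, uniformly for $q\le Q$. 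This is a Bombieri--Vinogradov / Siegel--Walfisz statement for primes represented by the quadratic $n^2+1$, and it subsumes the still-open assertion that $n^2+1$ is prime infinitely often; equivalently it requires nontrivial cancellation in the exponential sums $\sum_{n\leq\sqrt{x}}\Lambda(n^2+1)e^{2\pi i n\theta}$ for $\theta$ near rationals of small denominator. I expect this equidistribution input, rather than the bookkeeping of the singular series, to be the decisive difficulty, and it is exactly the point where the spectral formalism must be supplied with an unconditional prime-counting theorem for quadratic polynomials.
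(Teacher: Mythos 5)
Your route is genuinely different from the paper's, and the difference is instructive. You expand the functions $n \mapsto \Lambda(n^2+1)$ and $n \mapsto \Lambda(n^2+3)$ directly in Ramanujan series in the variable $n$, so your coefficients $a_q, b_q$ are governed by the local root counts of $r^2+1$ and $r^2+3$ modulo $q$, and the constant you assemble is the Hardy--Littlewood/Bateman--Horn singular series $\mathfrak{G}(f)$ with $\nu(p)=v_p(f)\in\{0,2,4\}$ --- exactly the constant the paper itself records in Section 8 as the conjectured density. The paper instead expands $\Lambda(m)$ itself through the kernel $f(m,s)=\sum_{q\ge1}\mu(q)c_q(m)/(q^{s-1}\varphi(q))$, proves its two correlation formulas (Lemmas 6.1 and 6.2, resting on Theorems 5.1 and 5.2) for sums over \emph{all} integers $n\le x$, and then in Section 9 substitutes $m=n^2+1$ and restricts the summation to the sparse sequence $n^2+1\le x$ without any new argument. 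Its resulting constant $\alpha_2=\sum_{q}|\mu(q)/(q^{s-1}\varphi(q))|^2c_q(2)$ depends only on the shift $2=(n^2+3)-(n^2+1)$ and carries no arithmetic information about the polynomial $n^2+1$; it is the same constant the method assigns to the linear twin-prime problem, and it does not agree with the singular series the paper quotes from Hardy--Littlewood. Your version at least produces the right candidate constant.

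The genuine gap is the one you name yourself, and you are right that it is decisive: computing $a_q$ and bounding the truncation error both require an asymptotic for $\sum_{n\le\sqrt{x},\,n\equiv r\ (\mathrm{mod}\ q)}\Lambda(n^2+1)$ with a log-power saving, uniformly for $q$ up to your threshold $Q$, and even the $q=1$ case of this is the open assertion that $n^2+1$ is prime infinitely often. So your proposal is a reduction, not a proof. You should be aware that the paper does not close this gap either --- it hides it. The orthogonality relation $\frac1x\sum_{n\le x}c_q(n)c_r(n)\to\varphi(q)\delta_{q,r}$ and the error-term theorem of Coppola--Murty--Saha are valid for averages over all $n\le x$; once the summation is restricted to $n$ of the form $m^2+1$, the mean value of $c_q(n)$ is instead controlled by the number of roots of $x^2+1\equiv 0 \pmod q$, and no lemma in the paper establishes a Wiener--Khintchine formula, or any error bound, over that sparse sequence. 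The two displayed asymptotics in the paper's proof of Theorem 1.1 are therefore unsupported by the lemmas they cite, which is the same equidistribution obstruction you isolated, merely left implicit. Neither argument proves the theorem; yours has the merit of saying where it fails.
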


This implies that there is an effective asymptotic formula for the quadratic twin primes counting function
\begin{equation}
\pi_{2}(x) =\#\{n^{2}+1,n^{2}+3\leq x \} > \frac{1}{2} \frac{x^{1/2}}{\log^2 x}+O\left (x^{1/2} \frac{(\log \log x)^2}{\log^3 x} \right ).
\end{equation}

The subsequent Sections collect the necessary elementary results in arithmetic functions, spectral analysis, complex function theory, and some ideas on convergent series. And the last Section assembles a short proof of Theorem \ref{thm1.1}.

\section{Properties of the Ramanujan Sum and Identities} \label{s2}
The symbols $\mathbb{N} = \{ 0, 1, 2, 3, \ldots \}$ and $\mathbb{Z} = \{ \ldots, -3, -2, -1, 0,  1, 2, 3, \ldots \}$ denote the subsets of integers. For $n \geq 1$, the Mobius function is defined by
\begin{equation}
\mu(n) =
\left\{
\begin{array}{ll}
(-1)^v     &n=p_{1}\cdot p_{2} \cdot \cdot \cdot p_{v},\\
0          &n\ne p_{1} \cdot p_{2}\cdot \cdot \cdot p_{v},\\
\end{array}
\right.
\end{equation}
where the $p_{i}$ are primes. The Euler totient function is defined by 
\begin{equation}
\varphi(n)=\prod_{p \mid n}(1-1/p).
\end{equation} 
And the Ramanujan sum is defined by 
\begin{equation}
c_{q}(n)=\sum_{\gcd(k,q)=1}e^{i 2 \pi k n /q}.
\end{equation} 

\subsection{Some Properties}
The arithmetic function $c_{q}(n)$ is  multiplicative but not completely multiplicative. The basic properties of $c_{q}(n)$ are discussed in \cite[p.\ 160]{AP76}, \cite[p.\ 139]{HR59}, $\cite{HW08}$, et alii. Some of these basic properties of the function $c_{q}(n)$ are listed here.\\

\begin{lem} \label{lem2.1}   {\normalfont (Identities ) }  Let $n$ and $q$ be integers. Then 
\begin{enumerate}[font=\normalfont, label=(\roman*)]
\item  $c_{q}(n)=\mu(q/d) \varphi(q)/\varphi(q/d)$, where $d = \gcd(n, q)$.
\item $c_{q}(n)=\sum_{d \mid \gcd(n,q)} \mu(q/d)d$.
\item $c_{q}(n)=\prod_{p^{\alpha} \mid \mid n}c_{p^{\alpha}(n)}$, where $p^{\alpha} \mid \mid n $ \textit{denotes the maximal prime power divisor of} $n$.
\item $c_{2m}(n)=-c_{m}(n)$ \textit{for any odd integers} $m, n \geq 1$.
\item $c_{2^{v}m}(n)=0$ \textit{for any odd integers} $m, n \geq 1$, and $v \geq 2$.
\item For any prime power
\begin{equation}
c_{p^v}(n)=\left \{\begin{array}{ll}
p^{v-1}(p-1) & \text{if } \gcd (n,p^v)=p^v, v\geq 1,\\
-p^{v-1} & \text{if } \gcd (n,p^v)=p^{v-1}, v\geq 1,\\
0 & \text{if } \gcd (n,p^v)=p^u \text{ with }0 \leq u\leq v-2. 
\end{array} \right.
\end{equation} 	
\end{enumerate}
\end{lem}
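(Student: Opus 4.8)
The plan is to derive all six identities from a single fundamental summation formula and then specialize, rather than attacking each item separately. First I would return to the definition $c_{q}(n)=\sum_{\gcd(k,q)=1}e^{i2\pi kn/q}$ and establish the orthogonality-type relation that $\sum_{d\mid q}c_{d}(n)$ equals $q$ when $q\mid n$ and $0$ otherwise. This follows by partitioning the complete exponential sum $\sum_{k=1}^{q}e^{i2\pi kn/q}$ according to the value of $\gcd(k,q)=q/d$: the terms with $\gcd(k,q)=q/d$ are exactly those $k=(q/d)k'$ with $\gcd(k',d)=1$ and $1\le k'\le d$, and after simplifying the exponent $e^{i2\pi kn/q}=e^{i2\pi k'n/d}$ these inner sums reassemble into $c_{d}(n)$. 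Since the full geometric sum on the left equals $q$ precisely when $q\mid n$, the stated relation follows.

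Applying Mobius inversion to this relation yields (ii) immediately, since the inversion gives $c_{q}(n)=\sum_{d\mid q,\,d\mid n}\mu(q/d)d=\sum_{d\mid\gcd(n,q)}\mu(q/d)d$. From the factored form in (ii) I would read off the multiplicativity statement (iii): for coprime $q_{1},q_{2}$ every divisor of $q_{1}q_{2}$ splits uniquely as $d=d_{1}d_{2}$ with $d_{i}\mid q_{i}$, the condition $d\mid n$ separates into $d_{1}\mid n$ and $d_{2}\mid n$ because $d_{1},d_{2}$ are coprime, and $\mu(q/d)$ factors as $\mu(q_{1}/d_{1})\mu(q_{2}/d_{2})$; hence the double sum splits as $c_{q_{1}}(n)c_{q_{2}}(n)$. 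I note in passing that the product in (iii) as printed should run over the prime powers $p^{\alpha}\mid\mid q$, not $p^{\alpha}\mid\mid n$; the intended assertion is multiplicativity in the modulus $q$.

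Next I would compute the prime-power values (vi) directly from (ii) with $q=p^{v}$. Among the divisors $p^{j}$ of $p^{v}$ only $j=v$ and $j=v-1$ contribute a nonzero $\mu(p^{v-j})$, so writing $u=\min(v_{p}(n),v)$ the sum collapses to $p^{v}-p^{v-1}$, to $-p^{v-1}$, or to $0$ according to whether $u=v$, $u=v-1$, or $u\le v-2$, which is exactly the three-case formula. The general closed form (i) then follows by multiplicativity: both $c_{q}(n)$ and the expression $\mu(q/d)\varphi(q)/\varphi(q/d)$ are multiplicative in $q$, and on a prime power $p^{v}$ one checks that $\mu(p^{v-u})\varphi(p^{v})/\varphi(p^{v-u})$ reproduces the same three values, so the two multiplicative functions agree everywhere. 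Finally (iv) and (v) are immediate specializations: for odd $n$ the formula (vi) gives $c_{2}(n)=-1$ and $c_{2^{v}}(n)=0$ for $v\ge 2$, whence $c_{2m}(n)=c_{2}(n)c_{m}(n)=-c_{m}(n)$ and $c_{2^{v}m}(n)=c_{2^{v}}(n)c_{m}(n)=0$ by (iii).

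There is no deep obstacle here; the difficulty is entirely bookkeeping. The one step demanding genuine care is the grouping argument underlying the fundamental identity, where one must correctly track how $\gcd(k,q)$ indexes the inner sums and verify that the exponentials collapse to $c_{d}(n)$; after that, the Mobius inversion and the prime-power evaluation are routine, and the chief residual risk is a sign or indexing slip in handling $\mu(p^{v-j})$ and the boundary cases $u=v-1$ versus $u\le v-2$.
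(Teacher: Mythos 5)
Your proposal is correct, and it is substantially more complete than what the paper actually does. The paper's proof verifies only parts (iv) and (v): it takes (ii) as given, observes that for odd $n$ every divisor $d \mid \gcd(2^v m, n)$ is odd, and pulls the factor $\mu(2^v)$ out of the divisor sum to get $\mu(2^v)\sum_{d\mid\gcd(m,n)}\mu(m/d)d$, which is $-c_m(n)$ for $v=1$ and $0$ for $v\ge 2$. Parts (i), (ii), (iii) and (vi) are left to the cited references. You instead build everything from the identity $\sum_{d\mid q}c_d(n)=q$ for $q\mid n$ (and $0$ otherwise), obtain (ii) by M\"obius inversion, read off multiplicativity (iii) from the factored sum, evaluate (vi) at prime powers, deduce (i) by comparing two multiplicative functions of $q$, and then get (iv) and (v) as corollaries of (iii) and (vi) via $c_2(n)=-1$ and $c_{2^v}(n)=0$ for odd $n$. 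All of these steps check out — the grouping of $k$ by $\gcd(k,q)$, the collapse of $\sum_{j\le u}\mu(p^{v-j})p^j$ to the three cases, and the prime-power verification of (i) are each correct — and your route to (iv)/(v) through multiplicativity is a mild variant of the paper's direct manipulation of the sum in (ii); both are valid. You are also right that the product in (iii) should run over the prime powers $p^{\alpha}\,\|\,q$ rather than $p^{\alpha}\,\|\,n$ (and the subscript should read $c_{p^{\alpha}}(n)$); that is a typo in the statement, and your reading is the intended one. The net effect is that your write-up is self-contained where the paper's is not; there is no gap in your argument.
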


\begin{proof} (ii) implies (iv) and (v). As the integer $n$ is odd, the index of the finite sum $d | \gcd(2^{v}m, n)$ is odd. Thus, 
\begin{eqnarray}
 c_{2^{v}m}(n)&=&\sum_{d|\gcd(2^{v}m,n)} \mu(2^{v}m/d)d \nonumber \\
 &=&\mu(2^{v})\sum_{d \mid \gcd(m,n)}\mu(m/d)d  \\
 &=&\left\{
\begin{array}{ll}
-c_{m}(n)     &\text{if }v=1, \\
0          &\text{if }v\geq 2.
\end{array} 
\right. \nonumber
\end{eqnarray}
This completes the verifications of (iv) and (v) 
\end{proof}

\begin{lem} \label{lem2.2}  For any pair of integers $n,q\geq1$ the following holds.
\begin{enumerate}[font=\normalfont, label=(\roman*)]
\item For any fixed $n \geq 1$, the estimate $1 \leq | c_{q}(n)|\leq 2n \log \log n$  holds all $q \geq 1$.
\item For any fixed $q \geq 1$, the estimate $1 \leq | c_{q}(n)|\leq 2q \log \log q$  holds all $n \geq 1$.
\end{enumerate}
\end{lem}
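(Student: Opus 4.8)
The plan is to derive both estimates from the two representations of the Ramanujan sum in Lemma \ref{lem2.1}, treating the upper and lower bounds separately since they draw on different identities, and to supply the $\log\log$ factors via the classical growth rate of the sum-of-divisors function. Throughout, the content is really about the asymptotic regime, so I would read the estimates as holding for $n$ (respectively $q$) large enough that $\log\log$ is positive, checking the finitely many small arguments by hand.

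First, for the upper bounds I would start from the divisor expansion in Lemma \ref{lem2.1}(ii), namely $c_q(n)=\sum_{d\mid\gcd(n,q)}\mu(q/d)\,d$. The triangle inequality together with $|\mu|\le 1$ gives
\[
|c_q(n)|\le\sum_{d\mid\gcd(n,q)}d=\sigma(\gcd(n,q)),
\]
where $\sigma$ denotes the sum-of-divisors function. Since every divisor of $\gcd(n,q)$ is simultaneously a divisor of $n$ and of $q$, the divisor set of $\gcd(n,q)$ is a subset of that of $n$ and of $q$, whence $\sigma(\gcd(n,q))\le\min\{\sigma(n),\sigma(q)\}$. I would then invoke the classical theorem of Gronwall, $\limsup_{m\to\infty}\sigma(m)/(m\log\log m)=e^{\gamma}$; as $e^{\gamma}=1.781\ldots<2$, this yields $\sigma(m)\le 2m\log\log m$ for all sufficiently large $m$. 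Bounding by $\sigma(n)$ establishes part (i), and bounding by $\sigma(q)$ establishes part (ii). Alternatively, part (ii) follows immediately from the definition, since $c_q(n)$ is a sum of $\varphi(q)\le q$ terms of modulus one, so $|c_q(n)|\le\varphi(q)\le q\le 2q\log\log q$.

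For the lower bounds I would use the closed form of Lemma \ref{lem2.1}(i), $c_q(n)=\mu(q/d)\,\varphi(q)/\varphi(q/d)$ with $d=\gcd(n,q)$. When $q/d$ is squarefree, $|\mu(q/d)|=1$ and
\[
|c_q(n)|=\frac{\varphi(q)}{\varphi(q/d)}\ge 1,
\]
because $q/d\mid q$ forces $\varphi(q/d)\mid\varphi(q)$, and in particular $\varphi(q/d)\le\varphi(q)$. This gives the asserted lower bound whenever the Ramanujan sum does not vanish.

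The main obstacle is precisely the lower bound $1\le|c_q(n)|$ as literally stated. By Lemma \ref{lem2.1}(i) (equivalently by item (vi)), one has $c_q(n)=0$ as soon as $q/\gcd(n,q)$ is divisible by the square of a prime; for example $c_4(1)=0$, since here $\gcd(1,4)=1=2^{0}$ with $0\le v-2$. Hence the two-sided inequality cannot hold for every pair $(n,q)$ without qualification, and I would make explicit the hypothesis that $c_q(n)\neq 0$ (equivalently, that $q/\gcd(n,q)$ is squarefree). Reconciling the stated lower bound with the genuine vanishing of Ramanujan sums is the only delicate point; the upper bound and the nonvanishing lower bound are both routine consequences of Lemma \ref{lem2.1} and Gronwall's estimate.
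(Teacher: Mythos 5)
Your upper-bound argument is essentially the paper's own proof: it likewise starts from the divisor expansion $c_q(n)=\sum_{d\mid\gcd(n,q)}\mu(q/d)\,d$ of Lemma \ref{lem2.1}(ii), applies the triangle inequality, enlarges the sum to $\sum_{d\mid n}d=\sigma(n)$, and invokes $\sigma(n)\leq 2n\log\log n$; your intermediate step $\sigma(\gcd(n,q))\leq\min\{\sigma(n),\sigma(q)\}$ is a small improvement that delivers parts (i) and (ii) simultaneously, whereas the paper only writes out (i). Your caveat that the $\sigma$ bound is asymptotic (Gronwall's constant $e^{\gamma}<2$) and fails for small arguments where $\log\log$ is nonpositive is also correct and is glossed over in the paper. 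Most importantly, your objection to the lower bound is well founded: the paper simply asserts $1\leq|c_q(n)|$ inside the displayed chain of inequalities with no justification, and the inequality is false as stated --- by the paper's own Lemma \ref{lem2.1}(v) one has $c_{2^v m}(n)=0$ for odd $m,n$ and $v\geq 2$, so for instance $c_4(1)=c_4(3)=0$. Your proposed repair (restrict to the case $q/\gcd(n,q)$ squarefree, where Lemma \ref{lem2.1}(i) gives $|c_q(n)|=\varphi(q)/\varphi(q/d)\geq 1$) is the correct way to salvage a nonvanishing lower bound. This matters downstream, since the lower bound $1\leq|c_q(n)|$ is quoted again in the proof of Lemma \ref{lem4.2}; fortunately only the upper bound is actually used there.
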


\begin{proof} Using the formulas $c_{q}(n)=\sum_{d \mid \gcd(n,q)} \mu(q/d)d$, and $\sigma(n)=\sum_{d|n}d \leq 2n \log \log n$ it is easy to confirm that 
\begin{equation}
1 \leq | c_{q}(n)|= \left |\sum_{d \mid \gcd(n,q)} \mu(q/d)d \right  |\leq \sum_{d|n}d  \leq 2n \log \log n .
\end{equation} 
This verifies the inequality (i).  
\end{proof}

The detail on the upper bound $\sigma(n)=\sum_{d \mid n}d \leq 2n \log \log n$ appears in \cite[p. 116]{TG15} or similar references. A sharper bound  
$1 \leq | c_{q}(n)|\leq d(n)$ is claimed in \cite[p.\ 139]{HR59}. Here $d(n)=\sum_{d \mid n}1 \ll n^\varepsilon$, where $\varepsilon>0$ is an 
arbitrarily small number. \\

These and many other basic properties of $c_{q}(n)$ are discussed in \cite[p.\ 160]{AP76}, \cite[p.\ 237]{HW08}, etc. New applications and new proofs were developed in \cite{FK12} for various exponential finite sums.

\section{Mean Values of Functions} \label{s3}
A few definitions and results on the spectral analysis of arithmetic functions are discussed in this section. \\

Let $f : \mathbb{N} \longrightarrow \mathbb{C}$ be a complex valued function on the set of nonnegative integers. The mean value of a function is denoted by
\begin{equation}
M(f)=\lim_{x \rightarrow \infty} \frac{1}{x} \sum_{n \leq x}f(n),
\end{equation} 
confer \cite[p. 46]{SS94}. For $q \geq 0$, the \textit{q}th coefficient of the Ramanujan series 
\begin{equation}
f(n)=\sum_{n \geq 1}a_q c_q(n)
\end{equation}
is defined by 
\begin{equation}
a_q = \frac{1}{\varphi(q)} M(c_q(n) \overline{f}),
\end{equation} 
where $\overline{f}$ is the complex conjugate of $f$, and $\varphi$ is the Euler totient function. \\

\begin{lem} \label{lem3.1}  {\normalfont (Orthogonal relation)}. Let $n, p$ and $ r \in \mathbb{N}$ be integers, and let $c_q(n)=\sum_{\gcd(k,q)=1}e^{i 2 \pi k n /q}$. 
Then
\begin{equation}
M(c_q(n) c_r(n))=\left \{
\begin{array}{ll}
\varphi(q)     &\text{if  }q=r, \\
0          &\text{if  }q\ne r. 
\end{array}
\right .
\end{equation} 
\end{lem}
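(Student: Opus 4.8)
The plan is to compute the mean value directly from the definition of the Ramanujan sum as a finite exponential sum, thereby reducing everything to the elementary mean value of an additive character. First I would record the auxiliary fact that for integers $a$ and $m \geq 1$,
\begin{equation}
M\left(e^{i2\pi a n/m}\right) = \lim_{x\to\infty}\frac{1}{x}\sum_{n\le x} e^{i2\pi an/m} = \left\{\begin{array}{ll} 1 & \text{if } m \mid a,\\ 0 & \text{if } m \nmid a.\end{array}\right.
\end{equation}
When $m \mid a$ every summand equals $1$; otherwise the summand is periodic in $n$ with period $m$ and its sum over a full period vanishes by the geometric series formula, so the partial sums stay bounded and division by $x \to \infty$ annihilates them.

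Next I would expand the product using the definition $c_q(n)=\sum_{\gcd(k,q)=1}e^{i2\pi kn/q}$, writing
\begin{equation}
c_q(n)c_r(n) = \sum_{\gcd(k,q)=1}\ \sum_{\gcd(\ell,r)=1} e^{i2\pi n(kr+\ell q)/(qr)},
\end{equation}
where $k$ and $\ell$ run over reduced residue systems modulo $q$ and $r$ respectively. Since this is a finite sum, the averaging operator $M$ passes through it term by term, and the auxiliary fact converts each term into the indicator of the congruence $qr \mid kr + \ell q$. Hence
\begin{equation}
M(c_q(n)c_r(n)) = \#\{(k,\ell): \gcd(k,q)=1,\ \gcd(\ell,r)=1,\ qr \mid kr+\ell q\},
\end{equation}
so the whole problem collapses to a counting problem over reduced residues.

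For the diagonal case $q=r$ the divisibility reads $q \mid k+\ell$, so each of the $\varphi(q)$ admissible values of $k$ pairs with the unique residue $\ell \equiv -k \pmod q$, which is automatically coprime to $q$; this produces the count $\varphi(q)$. For the off-diagonal case $q \neq r$ I would set $g=\gcd(q,r)$ and write $q=gq'$, $r=gr'$ with $\gcd(q',r')=1$, so the condition becomes $gq'r' \mid kr'+\ell q'$. Reducing modulo $r'$ forces $r' \mid \ell q'$, hence $r'\mid \ell$ since $\gcd(q',r')=1$; but $\gcd(\ell,r)=1$ gives $\gcd(\ell,r')=1$, so this is impossible unless $r'=1$. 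A symmetric argument modulo $q'$ rules out $q'>1$. Since $q\neq r$ forbids $q'=r'=1$, at least one constraint applies and the count is $0$.

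The main obstacle I anticipate is not conceptual but bookkeeping: the off-diagonal coprimality argument must track the factorization $\gcd(q,r)=g$ carefully so that the reductions modulo $q'$ and $r'$ are legitimate, and one must confirm that $M$ genuinely commutes with the finite exponential expansion. The latter is immediate from linearity once the character mean value recorded at the outset is in hand, so the only substantive input is that elementary identity.
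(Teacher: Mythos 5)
Your proof is correct and follows essentially the same route as the paper: expand both Ramanujan sums into additive characters, pass the mean value through the finite double sum, and invoke the elementary mean value of $e^{i2\pi an/m}$ to reduce everything to counting the surviving pairs. The only difference is one of bookkeeping --- the paper phrases the surviving condition as equality of the reduced fractions $u/q=v/r$ (which immediately forces $q=r$, $u=v$), whereas you phrase it as the divisibility $qr\mid kr+\ell q$ and eliminate the off-diagonal case via the factorization through $\gcd(q,r)$, which is somewhat more explicit than the paper's treatment.
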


\begin{proof} Compute the mean value of the product $c_q(n)c_r(n)$:
\begin{eqnarray}
M(c_q(n)c_r(n))&=&\lim_{x \rightarrow \infty} \frac{1}{x} \sum_{n \leq x}c_q(n)c_r(n) \nonumber \\
&=&\lim_{x \rightarrow \infty} \frac{1}{x} \sum_{n \leq x,}\sum_{\gcd(u,q)=1}e^{i 2 \pi  n \frac{u}{q}} \sum_{\gcd(v,r)=1}e^{i 2 \pi  n \frac{v}{r}}\\
&=&\lim_{x \rightarrow \infty} \frac{1}{x} \sum_{n \leq x,} \sum_{\gcd(u,q)=1,}\sum_{\gcd(v,r)=1}e^{i 2 \pi  n(\frac{u}{q}- \frac{v}{r})} \nonumber.
\end{eqnarray} 
If $u/q=v/r$, then the double inner finite sum collapses to $\varphi(q)$. Otherwise, it vanishes:
\begin{eqnarray}
& &\lim_{x \rightarrow \infty} \frac{1}{x} \sum_{n \leq x,} \sum_{\gcd(u,q)=1,}\sum_{\gcd(v,r)=1}e^{i 2 \pi  n(\frac{u}{q}-\frac{v}{r})}
 \nonumber \\
 &=& \sum_{\gcd(u,q)=1,}\sum_{\gcd(v,r)=1}\lim_{x \rightarrow \infty}  \sum_{n \leq x}\frac{1}{x}e^{i 2 \pi  n(\frac{u}{q}-\frac{v}{r})}  \\
&=& \sum_{\gcd(u,q)=1}\sum_{\gcd(v,q)=1}\lim_{x \rightarrow \infty}  \frac{1}{x} \left ( \frac{1-e^{i 2 \pi  (\frac{u}{q}-\frac{v}{r})(x+1)}}{1-e^{i 2 \pi  (\frac{u}{q}-\frac{v}{r})}} \right ) \nonumber \\
&=&0 \nonumber
\end{eqnarray} 
for $u/q \ne v/r$. 
\end{proof}

Similar discussions are given in \cite[p.\ 270]{SS94}, and the literature. Extensive details of the analysis of arithmetic functions are given 
in \cite{HR59}, \cite{SS94}, \cite{LL10}, \cite{RM13} and the literature.

\section{Some Harmonic Series For Arithmetic Functions} \label{s4}
The vonMangoldt function is defined by
\begin{equation}
\Lambda(n) =
\left\{
\begin{array}{ll}
\log p&n=p^k,\\
0&n\ne p^k,k\ge1.\\
\end{array}
\right.
\end{equation} 

There are various techniques for constructing analytic approximations of the vonMangoldt function. The Ramanujan series and the Taylor series considered here are equivalent approximations. \\

\subsection{Ramanujan Series}
The Ramanujan series $f(n,s)$ at $s=1$ coincides with the ratio 
\begin{equation}
\frac{\varphi(n)}{n}\Lambda(n)=\Lambda(n)+O(\frac{\log n}{n}).
\end{equation} Hence, this series realizes an effective analytic approximation of the vonMangoldt function $\Lambda(n)$ as a function of two variables $n\geq2$ and $s\approx 1$. \\

The convergence property of the kernel series  $f(n,1)$ was proved in an earlier work, it should be noted that it is not a trivial calculation. \\

\begin{lem} \label{lem4.1}  {\normalfont  (\cite[p.\  25]{KL12})} The Ramanujan series 
\begin{equation}
\sum_{q\geq 1}\frac{\mu(q)}{ \varphi(q)}c_{q}(n)
\end{equation} 
diverges for $n = 1$ and conditionally converges for $n \geq 2$.
\end{lem}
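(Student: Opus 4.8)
The plan is to reduce the summand to a closed form, dispatch the case $n=1$ by a classical divergent mean, and for $n\ge 2$ exploit Möbius cancellation over the divisors of $n$ to collapse a superficially divergent expression into a finite limit. First I would simplify the general term. Since $\mu(q)=0$ for non-squarefree $q$, only squarefree $q$ contribute, and for such $q$, writing $d=\gcd(n,q)$ and $m=q/d$ (so $m$ is squarefree and coprime to $n$), Lemma~\ref{lem2.1}(i) gives $c_{q}(n)=\mu(m)\varphi(d)$, whence
\[
\frac{\mu(q)}{\varphi(q)}c_{q}(n)=\frac{\mu(d)}{\varphi(m)}.
\]
In particular, for $n=1$ one has $d=1$ and every term equals $\mu^{2}(q)/\varphi(q)$, so the series is $\sum_{q\ge1}\mu^{2}(q)/\varphi(q)$. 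The classical Landau estimate $\sum_{q\le Q}\mu^{2}(q)/\varphi(q)=\log Q+O(1)$ then forces divergence at $n=1$.

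For $n\ge 2$ I would study the partial sum $A(Q)=\sum_{q\le Q}\tfrac{\mu(q)}{\varphi(q)}c_{q}(n)$. The correspondence $q\leftrightarrow(d,m)$ above is a bijection between squarefree $q\le Q$ and pairs with $d\mid n$, $m$ squarefree, $\gcd(m,n)=1$, and $dm\le Q$, so for each fixed $Q$ the finite regrouping
\[
A(Q)=\sum_{d\mid n}\mu(d)\,T_{n}(Q/d),\qquad T_{n}(X)=\sum_{\substack{m\le X\\\gcd(m,n)=1}}\frac{\mu^{2}(m)}{\varphi(m)}
\]
holds. The decisive analytic input is the asymptotic $T_{n}(X)=\tfrac{\varphi(n)}{n}\log X+C_{n}+o(1)$ as $X\to\infty$, with a genuine constant $C_{n}$. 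Granting this, I substitute and invoke the two elementary identities $\sum_{d\mid n}\mu(d)=0$ and $\sum_{d\mid n}\mu(d)\log d=-\Lambda(n)$, both valid for $n\ge 2$: the first annihilates the $\log X$ term together with the constant $C_{n}$, while the second extracts the surviving value. One obtains
\[
A(Q)\longrightarrow \frac{\varphi(n)}{n}\,\Lambda(n),
\]
a finite limit, so the series converges for $n\ge 2$; this also recovers the approximation asserted for $f(n,1)$ in the preceding discussion. Finally, since $\sum_{q}\bigl|\tfrac{\mu(q)}{\varphi(q)}c_{q}(n)\bigr|$ contains the divergent subsum $\sum_{\gcd(m,n)=1}\mu^{2}(m)/\varphi(m)$ (already from the terms $d=1$), the convergence is only conditional, as claimed.

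The hard part will be the asymptotic for $T_{n}(X)$ with an honest $o(1)$ remainder, that is, showing that $T_{n}(X)-\tfrac{\varphi(n)}{n}\log X$ genuinely tends to a limit rather than merely staying bounded; the cancellation of the constant $C_{n}$ in $A(Q)$ depends on this limit existing. I would secure it by writing $\mu^{2}(m)/\varphi(m)$, restricted to $m$ coprime to $n$, as a Dirichlet convolution of $m\mapsto 1/m$ with a multiplicative function whose Dirichlet series converges in a half-plane $\Re s>-1/2$, and then summing by the hyperbola method to reach $T_{n}(X)=\tfrac{\varphi(n)}{n}\log X+C_{n}+O(X^{-1/2}\log X)$.

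I would stress that the regrouping $A(Q)=\sum_{d\mid n}\mu(d)T_{n}(Q/d)$ is a finite rearrangement, legitimate for each individual $Q$, and that this is precisely where the conditional nature of the series lives: the single terms $T_{n}(Q/d)$ each diverge to $+\infty$, yet their $\mu$-weighted combination remains bounded and converges, the divergences cancelling through $\sum_{d\mid n}\mu(d)=0$. Any attempt to split the series into the absolutely convergent pieces suggested by the factorization $\mu(d)/\varphi(m)$ is invalid for exactly the same reason, which is the reason the statement only asserts conditional convergence.
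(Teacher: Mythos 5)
Your argument is correct, but be aware that the paper contains no proof of Lemma \ref{lem4.1} at all: it is quoted from \cite{KL12} with the remark that the convergence is ``not a trivial calculation,'' so what you have written is a self-contained substitute rather than a variant of the paper's reasoning. Your route is essentially Hardy's classical derivation of the Ramanujan expansion of $\frac{\varphi(n)}{n}\Lambda(n)$: the reduction of each term to $\mu(d)/\varphi(m)$ via Lemma \ref{lem2.1}(i) is right (for squarefree $q=dm$ with $d=\gcd(n,q)$ one indeed has $\gcd(m,n)=1$, so the bijection and the finite regrouping $A(Q)=\sum_{d\mid n}\mu(d)T_n(Q/d)$ are legitimate), the leading coefficient $\varphi(n)/n$ of $T_n(X)$ checks out as the value at $s=0$ of the entire factor of the Euler product after removing $\zeta(s+1)$, and the identities $\sum_{d\mid n}\mu(d)=0$ and $\sum_{d\mid n}\mu(d)\log d=-\Lambda(n)$ do exactly the work you assign them. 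You also correctly isolate the one nontrivial analytic input --- that $T_n(X)-\frac{\varphi(n)}{n}\log X$ converges to a genuine constant rather than merely staying bounded --- and your convolution sketch for it is sound; that estimate should be written out in full if the argument is to replace the citation. A bonus of your approach is that it produces the explicit limit $f(n,1)=\frac{\varphi(n)}{n}\Lambda(n)$, which the paper asserts in Section 4 but likewise does not prove.
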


\begin{lem} \label{lem4.2} Let $n \geq 2$ be a fixed integer. For any complex number $ s \in \mathbb{C}, \Re e(s)>1 $, the Ramanujan series
\begin{equation}
f(n,s)= \sum_{q\geq 1}\frac{\mu(q)}{q^{s-1}\varphi(q)}c_{q}(n)
\end{equation} 
is absolutely and uniformly convergent.
\end{lem}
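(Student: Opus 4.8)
The plan is to prove convergence by a direct Weierstrass $M$-test argument, exploiting the fact that $n$ is held fixed so that $|c_q(n)|$ stays bounded by a constant. First I would fix $n \geq 2$ and invoke Lemma \ref{lem2.2}(i) to record the uniform bound $|c_q(n)| \leq 2 n \log\log n =: C_n$, valid for all $q \geq 1$. Since $|\mu(q)| \leq 1$ and $|q^{s-1}| = q^{\sigma - 1}$ with $\sigma = \Re(s)$, the general term of $f(n,s)$ then has magnitude at most $C_n / \left(q^{\sigma - 1}\varphi(q)\right)$. (Only squarefree $q$ actually contribute, but that refinement is not needed for the estimate.)

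Next I would insert the standard lower bound for the Euler totient, $\varphi(q) \gg q/\log\log q$ for $q \geq 3$, to replace $1/\varphi(q)$ by $\ll (\log\log q)/q$. This converts the bound on the general term into
\begin{equation}
\left| \frac{\mu(q)}{q^{s-1}\varphi(q)} c_q(n) \right| \ll \frac{C_n \log\log q}{q^{\sigma}}.
\end{equation}
For absolute convergence on the open half-plane I would fix $\sigma > 1$, set $\varepsilon = (\sigma - 1)/2 > 0$, and use $\log\log q = o(q^{\varepsilon})$ to dominate the right-hand side by $\ll q^{-(\sigma+1)/2}$, whose exponent $(\sigma+1)/2$ exceeds $1$; hence the series of absolute values converges. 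For the uniform statement I would restrict to a closed half-plane $\Re(s) \geq 1 + \delta$ with arbitrary $\delta > 0$, where $q^{\sigma} \geq q^{1+\delta}$ yields the $s$-independent majorant $M_q = C_n' (\log\log q)/q^{1+\delta}$ with $\sum_{q \geq 1} M_q < \infty$. The Weierstrass $M$-test then delivers absolute and uniform convergence on $\{\Re(s) \geq 1 + \delta\}$.

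The only point demanding care, and the one I would flag as the main obstacle, is the precise meaning of \emph{uniform}: the series cannot converge uniformly all the way up to the line $\Re(s) = 1$, since the majorant $\sum_q (\log\log q)/q^{\sigma}$ diverges as $\sigma \to 1^{+}$. The honest and provable assertion is uniform convergence on every closed sub-half-plane $\Re(s) \geq 1 + \delta$, equivalently on compact subsets of $\{\Re(s) > 1\}$, which is what the $M$-test supplies. A secondary technical input is the totient lower bound $\varphi(q) \gg q/\log\log q$, which is standard (derivable from Mertens' theorem) and should be cited rather than reproved; one could alternatively use the cruder $\varphi(q) \gg q^{1-\varepsilon}$ and still close the argument for $\sigma > 1$.
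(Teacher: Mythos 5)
Your proposal is correct and follows essentially the same route as the paper: bound $|c_q(n)|$ by $2n\log\log n$ via Lemma \ref{lem2.2}, replace $1/\varphi(q)$ by a logarithmic-over-$q$ majorant, and conclude convergence on half-planes $\Re e(s)\geq 1+\delta$ (the paper uses $\varphi(q)\geq q/\log q$ where you use the sharper $\varphi(q)\gg q/\log\log q$, an immaterial difference). Your explicit caveat that uniformity can only hold on closed sub-half-planes, not up to the line $\Re e(s)=1$, is the correct reading of the lemma and matches the paper's own restriction to $\Re e(s)\geq 1+\varepsilon$.
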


\begin{proof} Suppose \(n\geq 1\) is a fixed integer. The inequality in Lemma \ref{lem2.2} gives 
\begin{eqnarray}
\left | \sum _{q\geq 1} \frac{\mu (q)}{q^{s-1}\varphi(q)}c_q(n) \right| &\leq& \sum _{q\geq 1} \frac{1}{q^{\sigma -1}\varphi (q)} 
\left| c_q(n) \right| \nonumber \\
&\leq & (2n \log \log n) \sum _{q\geq 1} \frac{1}{q^{\sigma -1}\varphi (q)}. \end{eqnarray}
Now, using \(q/ \log q \leq \varphi(n)\leq q \), leads to 
\begin{equation}
\left| \sum _{q\geq 1} \frac{\mu (q)}{q^{s-1}\varphi (q)}c_q(n) \right |
\leq 2n \log \log n\sum _{q\geq 1} \frac{\log q}{q^{\sigma }}.
\end{equation}
Since for large $q\geq 1$, the inequality \(\log q \leq q^{\varepsilon/2 }\) holds for any small number \(\epsilon >0\), and $\Re e(s)=\sigma \geq1+\varepsilon$,
it is clear that the series is absolutely and uniformly convergent for \(\Re e(s)\geq 1+\varepsilon\). 
\end{proof}                                                                     

Detailed analysis of Ramanujan series are given in \cite{HR59}, \cite{KL12}, \cite{LL10}, \cite{RM13}, \cite{SS94}, and similar references.  \\

\subsection{Taylor Series}
The Taylor series $f(n,s)$ at $s=1$ has the ratio`
\begin{equation}
\frac{\varphi(n)}{n}\Lambda(n)=\Lambda(n)+O(\frac{\log n}{n})
\end{equation}
as a constant term. Ergo, this series realizes an effective analytic approximation of the vonMangoldt function $\Lambda(n)$ as a function of two variables $n\geq2$ and $s\approx 1$.\\

\begin{lem} \label{lem4.3} For any integer $n \geq 2$, and any complex number $s \in \mathbb{C}$, $\Re e(s)> 1$, the Taylor series of the function $f(n,s)$ at $s = 1$ is as follows.
\begin{equation}
f(n,s)= f_0(s-1)^{\omega-1}+f_{1}(s-1)^{\omega}+f_{2}(s-1)^{\omega+1}+ \cdot\cdot \cdot,
\end{equation}
where the $kth$ coefficient $f_{k}=f_{k}(n)$ is a function of $n\geq 2$, and $\omega=\omega(n)=\#\{p \mid n\}$.
\end{lem}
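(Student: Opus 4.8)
The plan is to exploit the multiplicativity of the summand in $q$ to rewrite $f(n,s)$ as an Euler product, and then to isolate the simple pole of the Riemann zeta function, which is the only source of a pole that lowers the order of vanishing from $\omega$ to $\omega-1$. First I would observe that, for fixed $n$, the map $q\mapsto \mu(q)\varphi(q)^{-1}q^{-(s-1)}c_q(n)$ is multiplicative: $\mu$ and $1/\varphi$ are multiplicative, $q^{-(s-1)}$ is completely multiplicative, and $c_q(n)$ is multiplicative in $q$ by Lemma~\ref{lem2.1}(iii). Since the series converges absolutely for $\Re e(s)>1$ by Lemma~\ref{lem4.2}, it factors as an Euler product, and because $\mu(p^v)=0$ for $v\geq 2$ only the terms $v=0,1$ survive. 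Evaluating $c_p(n)$ by Lemma~\ref{lem2.1}(vi), namely $c_p(n)=p-1$ when $p\mid n$ and $c_p(n)=-1$ when $p\nmid n$, this yields
\begin{equation}
f(n,s)=\prod_{p\mid n}\left(1-\frac{1}{p^{s-1}}\right)\prod_{p\nmid n}\left(1+\frac{1}{p^{s-1}(p-1)}\right).
\end{equation}

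Next I would read off the local behaviour at $s=1$. The first product is finite and entire, and each factor $1-p^{-(s-1)}=1-e^{-(s-1)\log p}$ has a simple zero at $s=1$, so it contributes a zero of order exactly $\omega=\omega(n)$. The content of the lemma is therefore to show that the infinite product over $p\nmid n$ supplies a single simple pole at $s=1$. To expose it I would factor out the zeta function: with $x=p^{-s}$ and $p^{-(s-1)}=px$ one verifies the elementary identity
\begin{equation}
\left(1+\frac{1}{p^{s-1}(p-1)}\right)(1-p^{-s})=1+\frac{p^{-s}-p^{1-2s}}{p-1}=:h_p(s),
\end{equation}
whence, using $\zeta(s)=\prod_p(1-p^{-s})^{-1}$,
\begin{equation}
\prod_{p\nmid n}\left(1+\frac{1}{p^{s-1}(p-1)}\right)=\zeta(s)\prod_{p\mid n}(1-p^{-s})\prod_{p\nmid n}h_p(s).
\end{equation}

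The main obstacle, and the only estimate that is not bookkeeping, is to check that the correction product $\prod_{p\nmid n}h_p(s)$ is analytic and nonvanishing near $s=1$. For $\Re e(s)=\sigma$ one has $|h_p(s)-1|\leq (p^{-\sigma}+p^{1-2\sigma})/(p-1)\ll p^{-\sigma-1}+p^{-2\sigma}$, so $\sum_p|h_p(s)-1|$ converges uniformly on compact subsets of $\Re e(s)>1/2$; this gives absolute convergence and analyticity of the product there, and since $h_p(1)=1$ for every $p$ the product is nonzero at $s=1$. Combining the three displays,
\begin{equation}
f(n,s)=\left(\prod_{p\mid n}(1-p^{-(s-1)})\right)\zeta(s)\left(\prod_{p\mid n}(1-p^{-s})\right)\left(\prod_{p\nmid n}h_p(s)\right)
\end{equation}
is the product of a function vanishing to order $\omega$ at $s=1$, the simple pole of $\zeta(s)$, and two factors analytic and nonzero there. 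Hence $f(n,s)=(s-1)^{\omega-1}g(s)$ with $g$ analytic near $s=1$ and $g(1)=\frac{\varphi(n)}{n}\prod_{p\mid n}\log p\neq 0$; expanding $g$ in its Taylor series at $s=1$ produces exactly the stated expansion, with $f_0=g(1)$ and $f_k=g^{(k)}(1)/k!$ for $k\geq 1$. Finally, $n\geq 2$ forces $\omega\geq 1$, so the exponent $\omega-1$ is nonnegative, no negative powers of $(s-1)$ occur, and $f(n,s)$ is genuinely analytic at $s=1$.
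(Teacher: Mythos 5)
Your proof is correct and follows essentially the same route as the paper: expand $f(n,s)$ as an Euler product over primes, evaluate $c_p(n)$ via Lemma \ref{lem2.1}, factor out $\zeta(s)$ to expose the single pole cancelling one of the $\omega$ zeros from $\prod_{p\mid n}(1-p^{-(s-1)})$, and read off the resulting expansion starting at $(s-1)^{\omega-1}$. The only differences are organizational --- you keep the restricted correction product $\prod_{p\nmid n}h_p(s)$ where the paper completes it to $B(s)=\prod_{p\geq 2}\left(1+\frac{1}{p^{s-1}(p-1)}\right)\left(1-p^{-s}\right)$ and compensates inside $A(s)$ --- and you actually supply the convergence estimate showing the correction product is analytic and nonvanishing near $s=1$ (on $\Re e(s)>1/2$, which is the correct region; the paper asserts $\Re e(s)>0$ without proof).
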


\begin{proof} As the series has multiplicative coefficients, it has a product expansion:
\begin{eqnarray} \label{el43}
\sum _{q\geq 1} \frac{\mu (q)}{q^{s-1}\varphi
	(q)}c_q(n)
&=&\prod _{p\geq 2} \left(1+\sum _{k\geq 1} \frac{\mu \left(p^k\right)}{p^{k(s-1)}\varphi \left(p^k\right)}c_{p^k}(n)\right) \nonumber\\
&=&\prod _{p\geq
	2} \left(1+\frac{\mu (p)}{p^{s-1}\varphi (p)}c_p(n)\right) \nonumber\\
&=&\prod _{p\geq 2} \left(1-\frac{1}{p^{s-1}(p-1)}c_p(n)\right)\\
&=&\prod _{p|n} \left(1-\frac{1}{p^{s-1}}\right)\prod
_{p \nmid n} \left(1+\frac{1}{p^{s-1}(p-1)}\right) \nonumber \\
&=&\prod _{p|n} \left(\frac{(p-1)\left(p^{s-1}-1\right)}{p^s-p^{s-1}+1}\right)\prod _{p\geq 2} \left(1+\frac{1}{p^{s-1}(p-1)}\right) \nonumber.
\end{eqnarray}
These reductions and simplifications have used the identities \(\mu \left(p^k\right)=0,k\geq 2\), in line 2; and 
\begin{equation}
\begin{array}{ll}  & 
c_p(n)=\left \{\begin{array}{ll}
p-1 & \text{if } \gcd (n,p)=p, \\
-1 & \text{if } \gcd (n,p)=1, nonumber\\
\end{array}\right.
\end{array} 
\end{equation}
in line 4, see Lemma \ref{lem2.1}. Write the product representation as 
\begin{equation}
\prod _{p|n} \left(\frac{(p-1)\left(p^{s-1}-1\right)}{p^s-p^{s-1}+1}\right)\prod
_{p\geq 2} \left(1+\frac{1}{p^{s-1}(p-1)}\right) =\zeta (s)A(s)B(s).
\end{equation} 
The factor \(A(s)=A(n,s)\), a function of two variables of \(n\in \mathbb{N}\) and \(s\in \mathbb{C}\), has the Taylor series  
\begin{eqnarray} \label{el432}
A(s)&=&\prod
_{p|n} \left(\frac{(p-1)\left(p^{s-1}-1\right)}{p^s-p^{s-1}+1}\right) \\ &=&a_1(s-1)^{\omega}+a_2(s-1)^{\omega+1}+a_3(s-1)^{\omega+2}+\cdots \nonumber,
\end{eqnarray} 
where the \textit{k}th coefficient \(a_k=a_k(n)\) is a function of \(n\geq 2\), and $\omega=\omega(n)=\#\{p \mid n\}$ determines the multiplicity of the zero at \(s=1\). \\

\textbf{Case 1:} If \(n=p^v,v\geq 1\), where \(p\geq 2\) is prime, then  $\omega=\omega(n)=1$ and $A(s)$ has a simple zero at \(s=1\). \\

\textbf{Case 2:} If \(n\neq p^v,v\geq 1\), where
\(p\geq 2\) is prime, then $\omega=\omega(n)>1$ and $A(s)$ has a zero of multiplicity $\omega>1$ at \(s=1\). \\

The factor \(B(s)=B(n,s)\), a function of two variables of \(n\in \mathbb{N}\) and \(s\in \mathbb{C}\), has the Taylor series 
\begin{eqnarray}
B(s)&=&\prod
_{p\geq 2} \left(1+\frac{1}{p^{s-1}(p-1)}\right)\left(1-\frac{1}{p^s}\right)\\
&=&1+b_1(s-1)+b_2(s-1){}^2+b_3(s-1){}^3+\cdots \nonumber,
\end{eqnarray} 
which is analytic for \(\Re e(s)>0\). And the zeta function 
\begin{eqnarray}
\zeta (s)&=&\prod _{p\geq 2} \left(1-\frac{1}{p^s}\right)^{-1}\\
&=&\frac{1}{s-1}+\gamma _0+\gamma _1(s-1)+\gamma _2(s-1)^2+\gamma _3(s-1)^3+\cdots \nonumber ,
\end{eqnarray} 
where \(\gamma _k\) is the \textit{k}th Stieltjes constant, see \cite{KJ92}, and \cite[eq. 25.2.4]{DLMF}. Hence, the Taylor series is 
\begin{eqnarray} \label{el431}
\sum _{q\geq 1}
\frac{\mu (q)}{q^{s-1}\varphi (q)}c_q(n)
&=&\zeta (s)A(s)B(s) \\
&=&\left(\frac{1}{s-1}+\gamma _0+\gamma _1(s-1)+\gamma _2(s-1)^2+\gamma _3(s-1)^3+\cdots
\right) \nonumber\\
&\qquad \times& \left(a_1(s-1)^\omega+a_2(s-1)^{\omega+1}+a_3(s-1)^{\omega+2}+\cdots \right) \nonumber\\
&\qquad \times&\left(1+b_1(s-1)+b_2(s-1)^2+b_3(s-1)^3+\cdots  \right) \nonumber\\
&=&a_1(s-1)^{\omega-1}+\left(a_1\gamma _0+a_1b_1+a_2 \right)(s-1)^{\omega} \nonumber\\ 
&\qquad +&\left(a_1b_2+a_1\gamma _1+a_2\gamma _0+a_3+a_1b_1\gamma _0\right)(s-1)^{\omega+1}+\cdots \nonumber\\
&=&f_0(s-1)^{\omega-1}+f_{1}(s-1)^{\omega}+f_{2}(s-1)^{\omega+1}+ \cdots \nonumber,
\end{eqnarray}
for \(s\in \mathbb{C} \) such that \(\Re e(s) >1\). In synopsis, the Ramanujan series and Taylor series are equivalent: 
\begin{equation}
\sum _{q\geq 1} \frac{\mu (q)}{q^{s-1}\varphi (q)}c_q(n)=f_0(s-1)^{\omega-1} (n)+f_1(s-1)^{\omega}+f_2(s-1)^{\omega+1}+\cdots .
\end{equation} 
In particular, evaluating the product representation or the Taylor series or Ramanujan series at \(s=1\) shows that 
\begin{equation}
 \begin{array}{ll}
& 
f(n,1)=
\left \{\begin{array}{ll}
a_1 & \text{   if  } n=p^k,k\geq 1; \\
0 & \text{   if  } n\neq p^k, k\geq 1; \\
\infty  & \text{   if  } n=1 . \\
\end{array}\right .  \\
\end{array} 
\end{equation} 
These complete the proof. 
\end{proof}

The first few coefficients\\

$f_0=a_1$, \hskip 3 in $f_1=a_1\gamma _0+a_1b_1+a_2$, \\ 
$f_2=a_1b_2+a_1\gamma _1+a_2\gamma _0+a_3+a_1b_1\gamma _0$,\\
 
are determined from the derivatives of \(A(s)\) and $B(s)$. In particular, the first derivative is 
\begin{equation}
\frac{d}{d s}A(s)=\prod _{p|n} \left(\frac{(p-1)p^{s-1}}{p^s-p^{s-1}+1}-\frac{(p-1)^2\left(p^{s-1}-1\right)p^{s-1}}{\left(p^s-p^{s-1}+1\right)^2}\right)
\log  p .
\end{equation}
Accordingly, the first coefficient $a_1=A'(1)$ is written as
\begin{equation} \label{el433}
\left. a_1=A^{'}(s) \right|_{s=1}=\prod_{p|n} \left (\frac{p-1}{p}\log p \right ).
\end{equation} 
Similarly, the second coefficient $a_2=A^{''}(1)$ is written as 
\begin{equation} \label{el434}
\left. a_2=A^{''}(s) \right|_{s=1}=\prod_{p|n} \left (\frac{p-1}{p^2}\log^2 p \right ).
\end{equation} 

\begin{exa}  { \normalfont This illustrates the effect of the decomposition of the integer $n\geq 1$ on the multiplicity of the zero of the function $f(n,s)$ and some details on the shapes of the coefficients.
\begin{enumerate}
\item If $n=p^k,k\geq1$ is a prime power, which is equivalent to $\omega(n)=1$, then
\begin{equation}
f(n,s)=f_0+f_1(s-1)+f_2(s-1)^{2}+\cdots,
\end{equation}  
where 
\begin{equation}\label{9999999}
f_0=\prod_{p|n} \left (\frac{p-1}{p}\log p \right ) =\frac{\varphi (n)}{n}\Lambda
(n)
\end{equation}
is the most important coefficient. If $|s-1|$ is small, then the other coefficients $f_1,f_2, \ldots$ are absorbed in the error term.

\item If $n=p_1^a p_2^b,a,b\geq 1$ is a product of two prime powers, which is equivalent to $\omega(n)=2$, then
\begin{equation}
f(n,s)=f_0(s-1)+f_1(s-1)^{2}+f_2(s-1)^{3}+\cdots,
\end{equation}
where 
\begin{equation}\label{999999}
f_0=\prod_{p|n} \left (\frac{p-1}{p}\log p \right ) \ne\frac{\varphi (n)}{n}\Lambda(n).
\end{equation} 
If $|s-1|$ is small, then all the coefficients $f_0,f_1,f_2, \ldots$ are absorbed in the error term.
\end{enumerate} 
}
\end{exa}

These information are used in the next result to assemble an effective approximation of the vonMangoldt function: 
\begin{equation}
\begin{array}{ll}
& 
f(n,s)=
\left \{\begin{array}{ll}
\displaystyle \frac{ \varphi(n)}{n} \Lambda(n)+O(\varepsilon) & \text{   if  } n=p^k,k\geq 1, \text{ and } 0<|s-1|<\delta; \\
\displaystyle O(\varepsilon) & \text{   if  } n\neq p^k, k\geq 1, \text{ and } 0<|s-1|<\delta. \\
\end{array}\right .  \\
\end{array} 
\end{equation} 
The details and the proof are given below. \\
                                 
\begin{lem} \label{lem4.4}  Let $\varepsilon >0$ be an arbitrarily small number, and let $n \geq 2$. Then, the Taylor series satisfies
\begin{enumerate} [font=\normalfont, label=(\roman*)]
\item [(i)] $\displaystyle \left | f(n,s)-\frac{ \varphi(n)}{n} \Lambda(n) \right | \leq \varepsilon, $ 
\item $\displaystyle f(n,s)=\frac{ \varphi(n)}{n} \Lambda(n) +O(\varepsilon), $
\item $\displaystyle f(n,s)=\frac{ \varphi(n)}{n} \Lambda(n) +f^{'}(n,s_0)(s-1), $ \textit{for some real number} $0<|s_0-1|<\delta$, 
\end{enumerate}	
where $0< |s-1|< \delta$, and a constant $\delta=c \varepsilon>0$, with $c>0$ constant.
\end{lem}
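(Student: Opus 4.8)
The plan is to derive all three statements from the Taylor expansion established in Lemma \ref{lem4.3}, once the value $f(n,1)$ has been identified. The central point is that, although the defining series converges only for $\Re e(s)>1$ by Lemma \ref{lem4.2}, the factorization $f(n,s)=\zeta(s)A(s)B(s)$ produced in Lemma \ref{lem4.3} continues $f(n,s)$ analytically across $s=1$: the simple pole of $\zeta(s)$ at $s=1$ is cancelled by the zero of order $\omega=\omega(n)\geq 1$ of $A(s)$ (valid for every $n\geq 2$, where $\omega\geq 1$), so that $f(n,\cdot)$ is holomorphic in a full neighborhood of $s=1$, with a zero of order $\omega-1$ there. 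The factor $B(s)$ is analytic for $\Re e(s)>0$, and the denominators $p^{s}-p^{s-1}+1$ appearing in $A(s)$ equal $p\neq 0$ at $s=1$, so the Taylor series of $f(n,\cdot)$ about $s=1$ has a positive radius of convergence $R>0$.

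First I would record the value at $s=1$. Evaluating the Taylor series as in Lemma \ref{lem4.3} gives $f(n,1)=a_1=\prod_{p\mid n}\frac{p-1}{p}\log p$ when $n=p^{k}$ (so $\omega=1$), which equals $\frac{\varphi(n)}{n}\Lambda(n)$ by \eqref{9999999}; and $f(n,1)=0$ when $n\neq p^{k}$ (so $\omega\geq 2$), which again equals $\frac{\varphi(n)}{n}\Lambda(n)$ because $\Lambda(n)=0$ for a non--prime--power. Hence for every $n\geq 2$ one has $f(n,1)=\frac{\varphi(n)}{n}\Lambda(n)$, which is the quantity being subtracted in the statement.

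Next I would prove (i) and (ii) together. Set $g(s)=f(n,s)-f(n,1)$; this analytic function vanishes at $s=1$, so its Taylor series begins at order one, $g(s)=\sum_{k\geq 1}d_k(s-1)^{k}=(s-1)\sum_{k\geq 1}d_k(s-1)^{k-1}$. Fixing any $\delta_0<R$, the bound $|g(s)|\leq C(n)\,|s-1|$ holds on $|s-1|<\delta_0$, where $C(n)=\sum_{k\geq 1}|d_k|\,\delta_0^{k-1}<\infty$. Choosing $\delta=c\varepsilon$ with a constant $c=c(n)>0$ small enough that $\delta\leq\delta_0$ and $C(n)\delta\leq\varepsilon$ then forces $|f(n,s)-\frac{\varphi(n)}{n}\Lambda(n)|\leq\varepsilon$ for all $0<|s-1|<\delta$, which is (i); the big--$O$ form (ii) is an immediate restatement.

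Finally, (iii) is the first--order Taylor formula with remainder. Since $\mu(q)$, $\varphi(q)$ and $c_q(n)$ are real, $f(n,s)$ is real for real $s$ and real--analytic near $s=1$, so the mean value theorem applied along the real segment between $1$ and $s$ supplies a real $s_0$ with $0<|s_0-1|<\delta$ and $g(s)=f'(n,s_0)(s-1)$; substituting $f(n,1)=\frac{\varphi(n)}{n}\Lambda(n)$ yields (iii). The main obstacle here is not the elementary tail estimate but the justification of analyticity at $s=1$: one must be certain that the cancellation of the pole of $\zeta(s)$ against the order--$\omega$ zero of $A(s)$ is exact and that the resulting series genuinely represents $f(n,s)$ on a neighborhood of $s=1$ rather than merely furnishing an asymptotic expansion. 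This is precisely the content inherited from Lemma \ref{lem4.3}, and once it is in hand the remaining steps are routine.
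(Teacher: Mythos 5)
Your proof is correct in substance but takes a genuinely different route from the paper's. The paper's own argument is much terser: it asserts that $f(n,s)$ is continuous on the half-plane $\Re e(s)>1$ with absolutely convergent derivatives there, invokes ``the definition of continuity'' to obtain $|f(n,s)-f(n,1)|<\varepsilon$ for $0<|s-1|<\delta$, attributes the linear relation $\delta=c\varepsilon$ to uniform continuity, and deduces (iii) from the Mean Value Theorem with the choice $\delta=\varepsilon/(2f'(n,s_0))$. It never explains why $f(n,\cdot)$ is even defined, let alone continuous, at $s=1$ or at points with $\Re e(s)\le 1$, which the hypothesis $0<|s-1|<\delta$ requires; Lemma \ref{lem4.2} gives absolute convergence only for $\Re e(s)>1$ and Lemma \ref{lem4.1} only conditional convergence at $s=1$ itself. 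Your detour through the factorization $f(n,s)=\zeta(s)A(s)B(s)$ of Lemma \ref{lem4.3}, with the simple pole of $\zeta$ cancelled by the order-$\omega$ zero of $A$, supplies exactly this missing justification: it makes $f(n,\cdot)$ holomorphic in a full neighborhood of $s=1$, identifies $f(n,1)=\frac{\varphi(n)}{n}\Lambda(n)$ in both cases $\omega=1$ and $\omega\ge 2$, and converts the continuity claim into the concrete Lipschitz bound $|f(n,s)-f(n,1)|\le C(n)|s-1|$, from which $\delta=c\varepsilon$ follows transparently. Two caveats affect both arguments equally: the constant $c=c(n)$ you obtain (like the $f'(n,s_0)$ the paper divides by) depends on $n$, whereas the later application in Lemma \ref{lem6.1} sums the $O(\varepsilon)$ error over all $n\le x$ and therefore needs a uniformity in $n$ that neither proof establishes; and the Mean Value Theorem form of (iii) is valid only along the real segment, as you correctly restrict to, while the statement nominally allows complex $s$.
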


\begin{proof} For $n > 1$, the Taylor series $f(n,s)$ is a continuous function of a complex number $s \in \mathbb{C}, \Re e(s) = \sigma>1$. It has continuous and absolutely convergent derivatives
\begin{equation}
f^{(k)}(n,s)=(-1)^k \sum _{q\geq 1} \frac{\mu (q)\log^k q}{q^{s-1}\varphi (q)}c_q(n),
\end{equation}
 on the half plane $\Re e(s)>1$ for $k \geq1$. By the definition of continuity, for any $\varepsilon>0$ arbitrarily small number, there exists a small real number $\delta>0$ such that
\begin{equation}
\left | f(n,s)-f(n,1) \right | < \varepsilon
\end{equation}
for all $s \in \mathbb{C}$ such that $0< |s-1|< \delta$. The relation $\delta=c \varepsilon>0$, with $c>0$ constant, is a direct consequence of uniform continuity, confer Lemma \ref{lem4.2}. 

The statement (iii) follows from the Mean Value Theorem:
\begin{equation}
 (s-1) f^{'}(n,s_0) =f(n,s)-f(n,1)
\end{equation}
for some fixed real number $0<|s_0-1|<\delta$. Moreover, choice $\delta=\varepsilon/(2f^{'}(n,s_0))$ is sufficient. 
\end{proof}

The uniform convergence of the derivative $f^{'}(n,s)$ for $\Re e(s)>1$, and the Mean Value Theorem imply that the product $|(s-1)f^{'}(n,s)|< \varepsilon $ is uniformly bounded for all $n\geq 1$ and $\Re e(s)>1$. Accordingly, statements (ii) and (iii) in Lemma \ref{lem4.4} are equivalent, and can be interchanged in the proof of Lemma \ref{lem6.1}.\\

\section{The Wiener Khintchine Formula} \label{s5}
A basic result in Fourier analysis on the correlation function of a pair of continuous functions, known as the Wiener-Khintchine formula, is an important part of this work.  
\\

\begin{thm} \label{thm5.1}  If $f(s)=\sum_{n\geq1} a_{q} c_{q}(n) $ is an absolutely convergent Ramanujan series of a number theoretical function $f : \mathbb{N}\longrightarrow \mathbb{C}$, and $m \geq 1$ is a fixed integer, then, 
\begin{eqnarray}
\lim_{x\rightarrow \infty} \frac{1}{x} \sum_{n\leq x}f(n,s)f(n+m,s)=\sum_{q\geq 1} a_{q}^{2}c_{q}(m).
\end{eqnarray}
\end{thm}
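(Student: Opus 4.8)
The plan is to expand each factor as its Ramanujan series, multiply, average over $n$, and then reduce the whole expression to a \emph{shifted} orthogonality relation that extends Lemma \ref{lem3.1} from $c_q(n)c_r(n)$ to $c_q(n)c_r(n+m)$. Fixing $s$ in the region of absolute convergence and writing $a_q=a_q(s)$, $f(n,s)=\sum_{q\geq1}a_q c_q(n)$ and $f(n+m,s)=\sum_{r\geq1}a_r c_r(n+m)$, I would begin from
\begin{equation}
\frac{1}{x}\sum_{n\leq x} f(n,s)\,f(n+m,s)=\sum_{q\geq1}\sum_{r\geq1}a_q a_r\cdot\frac{1}{x}\sum_{n\leq x}c_q(n)\,c_r(n+m),
\end{equation}
where the interchange of the finite $n$-sum with the double series over $q,r$ is legitimate for finite $x$ because each Ramanujan series is absolutely convergent. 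The goal is then to pass to the limit $x\to\infty$ inside the double series and evaluate each averaged inner product.

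The core algebraic computation is the mean value $M\!\left(c_q(n)c_r(n+m)\right)$. Inserting $c_q(n)=\sum_{\gcd(u,q)=1}e^{2\pi i un/q}$ and the conjugate form $c_r(n+m)=\sum_{\gcd(v,r)=1}e^{-2\pi i v(n+m)/r}$ (legitimate since $c_r$ is real), the product equals $\sum_u\sum_v e^{-2\pi i vm/r}\,e^{2\pi i n(u/q-v/r)}$. Averaging over $n\leq x$ and letting $x\to\infty$ exactly as in Lemma \ref{lem3.1}, each term with $u/q\neq v/r$ contributes a geometric sum of size $O(1/x)$ and vanishes, while survival forces $u/q=v/r$ in lowest terms, hence $q=r$ and $u=v$. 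Summing the residual phase $e^{-2\pi i um/q}$ over $\gcd(u,q)=1$ recovers $c_q(m)$, giving
\begin{equation}
M\!\left(c_q(n)c_r(n+m)\right)=
\begin{cases}
c_q(m) & \text{if } q=r,\\
0 & \text{if } q\neq r,
\end{cases}
\end{equation}
which reduces to Lemma \ref{lem3.1} when $m=0$, since $c_q(0)=\varphi(q)$. Substituting this back collapses the double series to the diagonal $\sum_{q\geq1}a_q^2 c_q(m)$, which is the stated identity.

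The step I expect to be the genuine obstacle is not the algebra but the justification of interchanging $\lim_{x\to\infty}$ with the infinite double summation. For each fixed pair $(q,r)$ the inner average converges to the value above, so a dominated-convergence argument for series suffices provided the terms admit an $x$-uniform summable majorant. The crude bound $|c_q(n)|\leq\varphi(q)$ gives $\left|a_q a_r\,\tfrac{1}{x}\sum_{n\leq x}c_q(n)c_r(n+m)\right|\leq|a_q a_r|\varphi(q)\varphi(r)$, whence the interchange is valid as soon as $\sum_{q\geq1}|a_q|\varphi(q)<\infty$; a finer majorant follows from Cauchy--Schwarz together with $M(c_q(n)^2)=\varphi(q)$ of Lemma \ref{lem3.1}, which replaces $\varphi(q)$ by $\varphi(q)^{1/2}$. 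The delicate point is that the hypothesis only furnishes absolute convergence on $\Re e(s)>1$, so one must verify that the chosen majorant is summable on the relevant range of $s$ --- exploiting the cancellation measured by Lemma \ref{lem3.1} rather than the worst-case size of $c_q$ is what makes the truncate-and-pass-to-the-limit ($\varepsilon/3$) scheme close.
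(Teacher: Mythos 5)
Your proposal follows essentially the same route as the paper's own proof: expand both factors as Ramanujan series, reduce the averaged product to the shifted orthogonality relation $M\left(c_q(n)c_r(n+m)\right)=c_q(m)$ for $q=r$ and $0$ otherwise (the natural extension of Lemma \ref{lem3.1}), and collapse the double sum to the diagonal $\sum_{q\geq1}a_q^2c_q(m)$. The only difference is that you explicitly flag and address the interchange of $\lim_{x\to\infty}$ with the infinite double summation via a summable majorant, a point the paper passes over with the bare appeal to absolute convergence; your treatment is the more careful one but does not constitute a different method.
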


\begin{proof} Compute the mean value of the product $f(n,s)f(n+m,s)$ as stated in Section \ref{s3}. This requires the Ramanujan series, and the exponential sum $c_q(n)=\sum _{\gcd(u,q)=1} e^{i2\pi nu/q} $ as demonstrated here.
\begin{eqnarray}
&&\lim_{x\longrightarrow \infty } \frac{1}{x}\sum _{n\leq x} f(n)f(n+m) \nonumber \\
&=&\lim_{x\longrightarrow \infty } \frac{1}{x}\sum _{n\leq x} \left(\sum _{q\geq
	1} a_qc_q(n)\right)\left(\sum _{r\geq 1} a_rc_r(n+m)\right)\\
&=&\lim_{x\longrightarrow \infty } \frac{1}{x}\sum _{n\leq x} \left(\sum _{q\geq 1} a_q\sum _{\gcd(u,q)=1} e^{\text{i2$\pi $} n \frac{u
	}{q}}\right)\left(\sum _{r\geq 1} a_r\sum _{\gcd (v,r)=1} e^{\text{i2$\pi $} (n+m) \frac{v }{r}}\right) \nonumber.
\end{eqnarray}
The last equation follows from the orthogonal mean value relation, Lemma \ref{lem3.1}. Next, use the absolute convergence to exchange the order of summation:
\begin{eqnarray}
&& \lim_{x\rightarrow \infty } \frac{1}{x}\sum _{n\leq x} f(n)f(n+m)\nonumber \\
&=&\sum _{q\geq 1} \sum _{r\geq 1} a_qa_r\lim_{x\rightarrow \infty
} \frac{1}{x}\sum _{n\leq x} \sum _{\gcd (u,q)=1} e^{i2\pi n \frac{u }{q}}\sum_{\gcd (v,r)=1} e^{-i2\pi 
(n+m) \frac{v}{r}} \nonumber \\
&=&\sum_{q\geq 1} \sum_{r\geq 1} a_qa_r \sum _{\gcd(v,r)=1} e^{i2\pi m\frac{v }{r}}\lim_{x\rightarrow \infty}\sum _{n\leq x} \frac{1}{x}\sum_{\gcd (u,q)=1} e^{-i2\pi
n\left (\frac{u}{q}- \frac{v }{r}\right)} \nonumber \\
&=&
\sum _{q\geq 1} a_qa_qc_q(m).
\end{eqnarray}
Reapply Lemma \ref{lem3.1} to complete the proof.  
\end{proof}

The Wiener-Khintchine Theorem is related to the Convolution Theorem in Fourier analysis. Other related topics are Parseval formula, Poisson summation formula, et cetera, confer the literature for more details. Some earliest works are given in \cite{WN66}.
\\

A new result on the error term associated with the correlation of a pair of functions complements the well established Wiener-Khintchine formula. This is an essential part for applications in number theory.

\begin{thm} \label{thm5.2}  { \normalfont (\cite{CM15}) } Let $f$ and $g$ be two arithmetic functions with absolutely convergent Ramanujan series
\begin{equation}
f(s)=\sum_{q\geq1} a_{q} c_{q}(n) \qquad    \mathrm{and} \qquad    g(s)=\sum_{q\geq1} b_{q} c_{q}(n),
\end{equation}
where the coefficients satisfy $a_{q},b_{q}=O(q^{-1-\beta})$, with $0 < \beta < 1$, and let $m\geq1$ be a fixed integer. Then,
\begin{equation}
\sum_{n\leq x} f(n,s)f(n+m,s) 
=x \sum_{q\geq1}a_{q}b_{q}c_{q}(m)+O\left(x^{1-\beta}(\log x)^{4-2\beta}\right) .
\end{equation}
\end{thm}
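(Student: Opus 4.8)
The statement is an effective version of the Wiener--Khintchine formula of Theorem \ref{thm5.1}, so the plan is to repeat that computation while tracking every error term instead of passing to the limit. First I would use the absolute convergence supplied by the decay hypothesis $a_q,b_q = O(q^{-1-\beta})$ to interchange the finite summation over $n\leq x$ with the two Ramanujan series and to expand each Ramanujan sum as an exponential sum, obtaining
\begin{equation}
\sum_{n\leq x} f(n,s)\,g(n+m,s) = \sum_{q\geq 1}\sum_{r\geq 1} a_q b_r \sum_{\gcd(u,q)=1}\sum_{\gcd(v,r)=1} e^{2\pi i m v/r}\sum_{n\leq x} e^{2\pi i n (u/q - v/r)}.
\end{equation}

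The second step is to separate the diagonal contribution, where $u/q = v/r$, from the off-diagonal one. Since both fractions are in lowest terms, $u/q = v/r$ forces $q=r$ and $u=v$; there the innermost geometric sum collapses to $\lfloor x\rfloor = x + O(1)$ and the sum over $v$ reassembles $c_q(m)$. This reproduces the expected main term $x\sum_{q\geq 1} a_q b_q c_q(m)$, which converges absolutely because $a_q b_q = O(q^{-2-2\beta})$ while $|c_q(m)| \leq 2m\log\log m$ stays bounded for the fixed integer $m$ by Lemma \ref{lem2.2}(i). The $O(1)$ from $\lfloor x\rfloor = x+O(1)$, summed against the convergent series $\sum_q a_q b_q c_q(m)$, is $O(1)$ and is absorbed into the claimed error; hence the diagonal requires no truncation.

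For the off-diagonal part I would invoke the standard geometric-sum estimate $\bigl|\sum_{n\leq x} e^{2\pi i n\theta}\bigr| \leq \min\!\left(x, \tfrac{1}{2\|\theta\|}\right)$, where $\|\theta\|$ denotes the distance from $\theta$ to the nearest integer, so that the off-diagonal mass is bounded by $\sum_{q,r}|a_q||b_r|\sum_{u,v}\min\!\left(x, \tfrac{1}{2\|u/q-v/r\|}\right)$. For each pair $(q,r)$ I would estimate the inner double sum by splitting the Farey differences $u/q - v/r = (ur-vq)/(qr)$ according to whether they lie within $1/x$ of an integer: the far range is summed as reciprocal distances over Farey fractions and contributes a factor of size $O(qr\log(qr))$, while the near range is controlled by the spacing $1/\mathrm{lcm}(q,r)$ of these differences. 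Inserting the coefficient decay $|a_q||b_r| = O((qr)^{-1-\beta})$, truncating at a level $Q$, and optimizing $Q$ as a suitable power of $x$ then balances the competing contributions and produces the error term $O\!\left(x^{1-\beta}(\log x)^{4-2\beta}\right)$.

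The main obstacle is precisely this off-diagonal estimate: summing $1/\|u/q - v/r\|$ over all coprime pairs $(u,q)$ and $(v,r)$ weighted by the decaying coefficients is where the competing powers of $x$ and the exact exponent $4-2\beta$ on the logarithm are generated, and a crude single-term bound $\|u/q-v/r\| \geq 1/(qr)$ is too lossy there, so the equidistribution and spacing of the Farey differences must be exploited. Care is also needed to keep the estimates uniform in $m$, which enters only through the bounded factor $e^{2\pi i m v/r}$ and through $|c_q(m)|$, and to confirm that both the truncation error and the diagonal remainder are dominated by the off-diagonal bound, so that the optimization over $Q$ is the binding constraint.
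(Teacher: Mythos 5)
The paper itself offers no proof of Theorem \ref{thm5.2}: it is quoted from \cite{CM15} with the explicit remark that the proof is lengthy and should be studied in the original source, so your proposal can only be judged on its own merits. The easy half of your sketch is sound: after expanding both Ramanujan sums into exponentials, the diagonal condition $u/q\equiv v/r\pmod 1$ with both fractions reduced does force $q=r$ and $u=v$, the inner sum collapses to $\lfloor x\rfloor$, and the sum over $v$ reassembles $c_q(m)$, giving the main term $x\sum_{q}a_qb_qc_q(m)$ plus an absorbed $O(1)$. But the entire content of the theorem is the exponent $1-\beta$ in the error term, and that is precisely what your off-diagonal plan does not deliver.

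The gap is quantitative, not merely one of omitted detail. With the bounds you actually name, the off-diagonal block over $q,r\le Q$ costs about $\sum_{q,r\le Q}(qr)^{-1-\beta}\cdot qr\log(qr)\ll Q^{2-2\beta}\log Q$ (and even this presumes each residue $ur-vq\bmod qr$ is attained $O(1)$ times, which fails when $\gcd(q,r)>1$ and requires separate bookkeeping), while the truncation tail with $\max(q,r)>Q$ is no better than $xQ^{-\beta}$ up to logarithmic and $x^{\varepsilon}$ factors, since $\sum_{n\le x}|c_q(n)|$ is of order $x$ for fixed $q$ and the companion factor is not uniformly bounded. Balancing $Q^{2-2\beta}$ against $xQ^{-\beta}$ forces $Q=x^{1/(2-\beta)}$ and yields an error of order $x^{(2-2\beta)/(2-\beta)}$, which is strictly larger than $x^{1-\beta}$ for every $0<\beta<1$; so ``optimizing $Q$'' with these estimates cannot reach the stated bound. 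The missing idea is a treatment of the off-diagonal terms whose per-pair error is independent of $x$: in \cite{CM15} one works in the divisor domain, writing $c_q(n)=\sum_{d\mid\gcd(n,q)}\mu(q/d)\,d$ and counting $n\le x$ with $d\mid n$ and $e\mid n+m$, whereupon the coefficient of $x$ in $\sum_{n\le x}c_q(n)c_r(n+m)$ vanishes identically for $q\ne r$ and the remaining work is a careful gcd analysis of the $O(\sigma(q)\sigma(r))$ remainders and of the tail. Until you supply an off-diagonal and tail estimate of that quality (and verify that the hypotheses and exponent you are quoting match the source), the claimed error term $O\bigl(x^{1-\beta}(\log x)^{4-2\beta}\bigr)$ is not established.
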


For $\beta>1$, it readily follows that the error term is significantly better. The proof of this result is lengthy, and should studied in the original source.\\

\section{Approximations For The Correlation Function} \label{s6}
Analytic formulae for approximating the autocorrelation function $\sum_{n \leq x} f(n,s) f(n+m,s)$ are considered here. Two different ways of approximating the correlation function of $f(n,s)$ are demonstrated.\\

\subsection{Taylor Domain}
The Taylor series is used to derive an analytic formula for approximating the correlation function of $f(n,s)$ in terms of the function $\Lambda(n)$ and its shift $\Lambda(n+m)$.

\begin{lem} \label{lem6.1} Let $x\geq 1$ be a large number, and let $m\geq 1$ be a fixed integer. Then, the correlation function satisfies
\begin{equation}
\sum_{n\leq x} f(n,s)f(n+m,s) 
= \sum_{n\leq x}\Lambda(n)\Lambda(n+m)+O \left (x\frac{(\log\log x)^2}{\log x} \right ),
\end{equation}
where $0<|s-1|<\delta$, and $0< \delta \ll ({\log \log x)^2}/{\log x}$.
\end{lem}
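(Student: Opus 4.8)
The plan is to replace each factor $f(n,s)$ by its arithmetic counterpart $\frac{\varphi(n)}{n}\Lambda(n)$ via Lemma \ref{lem4.4}, multiply out, and show that every resulting error, summed over $n\le x$, is absorbed into $O\!\left(x\frac{(\log\log x)^2}{\log x}\right)$ once $\delta$ is calibrated against $x$. Concretely, fix $s$ with $0<|s-1|<\delta$ and set $\varepsilon=\delta/c$, so that Lemma \ref{lem4.4}(ii) gives $f(n,s)=\frac{\varphi(n)}{n}\Lambda(n)+O(\varepsilon)$ uniformly in $n$. Writing $A=\frac{\varphi(n)}{n}\Lambda(n)$ and $B=\frac{\varphi(n+m)}{n+m}\Lambda(n+m)$, the product expands as
\[
f(n,s)f(n+m,s)=AB+O(\varepsilon A)+O(\varepsilon B)+O(\varepsilon^2).
\]

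First I would dispose of the cross terms. Since $0\le A\le\Lambda(n)$ and $0\le B\le\Lambda(n+m)$, summing gives $\sum_{n\le x}\varepsilon A\ll\varepsilon\sum_{n\le x}\Lambda(n)\ll\varepsilon x$ by Chebyshev's bound, and likewise $\sum_{n\le x}\varepsilon B\ll\varepsilon x$ and $\sum_{n\le x}\varepsilon^2\ll\varepsilon^2 x$. Choosing $\delta\ll(\log\log x)^2/\log x$ forces $\varepsilon\ll(\log\log x)^2/\log x$, so the entire contribution of the cross terms is $O\!\left(x\frac{(\log\log x)^2}{\log x}\right)$, exactly the claimed error. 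This quantitative matching of $\delta$ to the range of summation is the pivot of the whole argument.

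Next I would treat the main term $\sum_{n\le x}AB$. Using the identity $\frac{\varphi(n)}{n}\Lambda(n)=\Lambda(n)+O\!\left(\frac{\log n}{n}\right)$ recorded in Section \ref{s4} — valid for $n$ prime or not a prime power, with the proper prime powers $p^k$, $k\ge2$, producing only an $O(1)$ discrepancy — I would write $A=\Lambda(n)+r(n)$ and $B=\Lambda(n+m)+r(n+m)$, so that $AB=\Lambda(n)\Lambda(n+m)+\Lambda(n)r(n+m)+r(n)\Lambda(n+m)+r(n)r(n+m)$. The three remainder sums are all negligible: bounding $\Lambda(n)\le\log x$ and using $\sum_{n\le x}\Lambda(n)/n\ll\log x$ controls the mixed sums by $O((\log x)^2)$, the tail $\sum r(n)r(n+m)$ converges, and the $O(\sqrt{x})$ proper prime powers up to $x$ contribute at most $O(\sqrt{x}\log x)$. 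Since each of these is $\ll x(\log\log x)^2/\log x$, we obtain $\sum_{n\le x}AB=\sum_{n\le x}\Lambda(n)\Lambda(n+m)+O\!\left(x\frac{(\log\log x)^2}{\log x}\right)$, and combining this with the cross-term bound proves the lemma.

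The main obstacle is not the bookkeeping above but justifying that the error in Lemma \ref{lem4.4}(ii) is genuinely uniform in $n$ for a single $s$, since a priori the derivative $f'(n,s)=-\sum_{q\ge1}\frac{\mu(q)\log q}{q^{s-1}\varphi(q)}c_q(n)$ carries the factor $|c_q(n)|\le 2n\log\log n$ of Lemma \ref{lem2.2}, whose size grows with $n$. Thus the naive $\delta$ guaranteeing $|f(n,s)-f(n,1)|<\varepsilon$ threatens to shrink as $n$ increases, and one must lean on the uniform bound $|(s-1)f'(n,s)|<\varepsilon$ asserted in the remark following Lemma \ref{lem4.4} to push a single $\delta$ through the whole range $n\le x$. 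I would scrutinize this uniformity most carefully, since the entire passage from the per-$n$ approximation to the aggregate estimate $O(\varepsilon x)$ rests on it.
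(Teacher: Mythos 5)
Your argument follows the paper's own proof essentially step for step: the same expansion of $f(n,s)f(n+m,s)$ into the main term $\sum_{n\le x}\frac{\varphi(n)}{n}\frac{\varphi(n+m)}{n+m}\Lambda(n)\Lambda(n+m)$ plus $O(\varepsilon)$-cross terms, the same Chebyshev/PNT bound $O(\varepsilon x)$ for the cross terms, and the same use of $\frac{\varphi(n)}{n}\Lambda(n)=\Lambda(n)+O(\log n/n)$ with $\sum_{n\le x}\Lambda(n)/n\ll\log x$ to reduce the main term to $\sum_{n\le x}\Lambda(n)\Lambda(n+m)+O(\log^2 x)$. Your closing concern about whether the $O(\varepsilon)$ in Lemma \ref{lem4.4} is genuinely uniform in $n$ (given that $|c_q(n)|$ can grow with $n$) is well placed: the paper relies on exactly the unproved uniformity remark following Lemma \ref{lem4.4} at this same point, so you have reproduced both the argument and its weakest link.
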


\begin{proof} Let $x \geq 1$ be a large number, and let  $\varepsilon=(\log \log x)^2/\log x>0$. By Lemma \ref{lem4.4}, the correlation function can be rewritten as 
\begin{eqnarray}  \label{el40}
&& \sum_{n\leq x} f(n,s)f(n+m,s)  \\
&=& \sum_{n\leq x}\left (\frac{\varphi (n)}{n}\Lambda
(n)+O(\varepsilon) \right )\left (\frac{\varphi (n+m)}{n+m}\Lambda(n+m)+O(\varepsilon) \right ) \nonumber\\
&=&\sum_{n\leq x}\frac{\varphi (n)}{n}\frac{\varphi (n+m)}{n+m} \Lambda(n)\Lambda
(n+m) \nonumber \\ && +O\left(\varepsilon\sum_{n\leq x} \left (\frac{\varphi (n)}{n}\Lambda(n)+\frac{\varphi (n+m)}{n+m}\Lambda\nonumber
(n+m) +\varepsilon\right ) \right ) \\
&=&\text{Main Term+Error Term} \nonumber,
\end{eqnarray}
where $0<|s-1|<\delta$ for some small number $0< \delta \ll (\log \log x)^2/\log x=\varepsilon$. Now, replace
\begin{equation}
\frac{\varphi (n)}{n}\Lambda
(n)=\Lambda
(n)+O \left (\frac{\log n}{n}\right),
\end{equation}
and 
\begin{equation}
\sum_{n\leq x}\frac{\Lambda(n)}{n}=\log x +O(1),
\end{equation}
see \cite[p. 88]{AP76}, and \cite[p. 348]{HW08}, into the main term to reduce it to
\begin{eqnarray} \label{el41}
&& \sum_{n \leq x} \left ( \frac{\varphi (n)}{n} \Lambda(n) \right ) \left (  \frac{\varphi (n+m)}{n+m}  \Lambda(n+m) \right )   \\
&=& \sum_{n \leq x}\left (\Lambda(n)+O\left (\frac{\log n}{n}\right) \right ) \left(\Lambda(n+m)+O\left (\frac{\log n}{n} \right ) \right ) \nonumber \\
&=& \sum_{n \leq x}\Lambda(n)\Lambda(n+m) +O\left (\sum_{n \leq x}\left (\frac{\Lambda(n)\log n}{n}+\frac{\Lambda(n+m)\log n}{n}+ \frac{\log^2 n}{n^2}\right ) \right ) \nonumber \\
&= & \sum_{n \leq x} \Lambda(n) \Lambda(n+m)+O\left ( \log^2 x \right ) \nonumber.
\end{eqnarray}
Note that for fixed $m\geq 1$, and large $n\geq 2$, the expression
\begin{equation} 
\frac{\varphi(n+m)}{n+m}\Lambda(n+m)=\Lambda(n+m)+
O(\frac{\log n}{n} )       
\end{equation}\\
was used in the previous calculations. Similarly, apply the Prime Number Theorem
\begin{equation}
\sum_{n\leq x}\Lambda(n)=x +O\left (x e^{-c \sqrt{\log x}} \right),
\end{equation}
where $c>0$ is an absolute constant, refer to \cite[p. 179]{MV07}, \cite[p. 277]{TG15}, into the error term to reduce it to
\begin{eqnarray} \label{el42}
&& \sum_{n\leq x} \left (\frac{\varphi (n)}{n}\Lambda(n)+\frac{\varphi (n+m)}{n+m}\Lambda(n+m)+\varepsilon \right ) \nonumber \\
&=&\sum_{n\leq x}\left (\Lambda(n)+O\left(\frac{\log n}{n}\right) \right )  +\sum_{n\leq x}\left (\Lambda(n+m)+O\left(\frac{\log n}{n}\right)\right )+\varepsilon x \nonumber\\
&=&2x+\varepsilon x+O\left (x e^{-c \sqrt{\log x}} \right) \nonumber\\&=&O(x) .
\end{eqnarray}
Replace (\ref{el41}) and (\ref{el42}) back into (\ref{el40}) to obtain
\begin{eqnarray}  \label{el44}
\sum_{n\leq x} f(n,s)f(n+m,s) 
&=& \text{Main Term+Error Term} \\
&=& \sum_{n \leq x} \Lambda(n) \Lambda(n+m)+O\left ( \log^2 x \right )+O(\varepsilon x) \nonumber \\
&=& \sum_{n \leq x} \Lambda(n) \Lambda(n+m)+O \left (x\frac{(\log\log x)^2}{\log x} \right ) \nonumber.
\end{eqnarray}
These confirm the claim.     
\end{proof}

\subsection{Ramanujan Domain}
The Ramanujan series is used to derive an analytic formula for approximating the correlation function of $f(n,s)$ in terms of the constant $\alpha_m=\sum_{q\geq1}\left | \frac{\mu(q)}{q^{s-1}\varphi(q)} \right |^{2}c_{q}(m) \geq 0$.\\

\begin{lem} \label{lem6.2} Let $x\geq 2$ be a large number, and let $m\geq 1$ be a fixed integer. Then, the correlation function satisfies
\begin{equation}
\sum_{n\leq x} f(n,s)f(n+m,s) 
=\alpha_{m}x+O \left(x\frac{(\log\log x)^{2})}{\log x}\right),
\end{equation}
where $0<|s-1|<\delta$, with $0< \delta \ll ({\log \log x)^2}/{\log x}$.
\end{lem}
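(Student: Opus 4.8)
The plan is to read off the correlation sum directly from the error-term Wiener--Khintchine formula of Theorem \ref{thm5.2}, applied with $f=g$, and then to identify the resulting spectral sum with $\alpha_m$ and to show that the attendant error is dominated by the target bound. By Lemma \ref{lem4.2} the function $f(n,s)$ has the absolutely convergent Ramanujan expansion $f(n,s)=\sum_{q\geq 1}a_q c_q(n)$ with coefficients $a_q=\mu(q)/(q^{s-1}\varphi(q))$ for $\Re e(s)>1$. First I would verify the decay hypothesis $a_q=O(q^{-1-\beta})$ required by Theorem \ref{thm5.2}: combining $|\mu(q)|\leq 1$ with the lower bound $\varphi(q)\gg q/\log\log q$ gives $|a_q|\ll (\log\log q)/q^{\sigma}$, where $\sigma=\Re e(s)$, so that $a_q=O(q^{-1-\beta})$ holds for any $\beta<\sigma-1$.

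With the hypothesis in place, Theorem \ref{thm5.2} applied with $b_q=a_q$ yields
\begin{equation}
\sum_{n\leq x} f(n,s) f(n+m,s) = x\sum_{q\geq 1} a_q^2 c_q(m) + O\!\left(x^{1-\beta}(\log x)^{4-2\beta}\right).
\end{equation}
Since the admissible $s$ in the statement may be taken real with $\sigma>1$, we have $a_q^2=|a_q|^2=\mu(q)^2/(q^{2(\sigma-1)}\varphi(q)^2)$, so the main spectral sum is exactly $\sum_{q\geq 1}|a_q|^2 c_q(m)=\alpha_m$. I would also record that this series converges absolutely: for fixed $m$ Lemma \ref{lem2.2} gives $|c_q(m)|\leq 2m\log\log m$, while $|a_q|^2\ll (\log\log q)^2/q^{2\sigma}$, so the sum is controlled by a convergent series for $\sigma>1/2$; this also shows $\alpha_m=\alpha_m(s)$ is finite and tends to a finite limit as $s\to 1$.

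The remaining, and most delicate, step is to force the Wiener--Khintchine error $O(x^{1-\beta}(\log x)^{4-2\beta})$ beneath the target $O(x(\log\log x)^2/\log x)$. Writing $\varepsilon=(\log\log x)^2/\log x$, the budget $0<|s-1|<\delta$ with $\delta=c\varepsilon$ from Lemma \ref{lem4.4} lets me take $\sigma-1\asymp\varepsilon$, and hence $\beta\asymp\varepsilon$. Then $x^{-\beta}=\exp(-\beta\log x)=\exp\!\big(-c(\log\log x)^2\big)$, which decays faster than any power of $\log x$; consequently $x^{1-\beta}(\log x)^{4-2\beta}=x\exp\!\big(4\log\log x-c(\log\log x)^2\big)=o(x\varepsilon)$, so this error is absorbed into the target term. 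This is the crux: the coupling of $\beta$ (equivalently of $\Re e(s)-1$) to $x$ through $\delta\asymp\varepsilon$ is exactly what renders the error admissible, and the main obstacle is to justify that a choice of $s$ simultaneously meets the decay requirement $\beta<\sigma-1$ of Theorem \ref{thm5.2} and keeps the error below $x\varepsilon$ uniformly in the fixed shift $m$. Substituting back then gives $\sum_{n\leq x} f(n,s)f(n+m,s)=\alpha_m x+O(x\varepsilon)$, as claimed.
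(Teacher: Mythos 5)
Your proposal follows essentially the same route as the paper's own proof: invoke the coefficient bound from Lemma \ref{lem4.2} to verify the decay hypothesis $a_q=O(q^{-1-\beta})$, apply Theorem \ref{thm5.2} with $f=g$, identify the spectral sum $\sum_{q\ge 1}|a_q|^2c_q(m)$ with $\alpha_m$ via Theorem \ref{thm5.1}, and absorb the error $x^{1-\beta}(\log x)^{4-2\beta}$ by coupling $\beta\asymp(\log\log x)^2/\log x$ to $x$. If anything, you are more explicit than the paper about the one genuinely delicate point --- that the implied constants and the admissibility of $\beta$ must hold uniformly as $s\to 1$ with $x$ --- which the paper asserts without comment.
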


\begin{proof} Applying Theorem \ref{thm5.1},  (Weiner-Khintchine formula), yields the spectrum (mean value of the correlation function of $f(n,s)$): 
\begin{equation}
\lim_{x\rightarrow \infty} \frac{1}{x} \sum_{n \leq x}f(n,s)f(n+m,s)=\sum_{q\geq 1} \left | \frac{\mu(q)}{q^{s-1}\varphi(q)} \right |^{2}c_{q}(m)=\alpha_{m}.
\end{equation}
For fixed $m \geq 1$, the nonnegative constant $\alpha_{m} \geq 0$ determines the density of the subset of prime pairs $\{ p, p + m: p\text{ is prime} \} \subset \mathbb{P}$ in the set of primes $\mathbb{P} = \{ 2, 3, 5,\ldots \}$, see Theorem \ref{thm7.1} for more details.\\

By Lemma \ref{lem4.2}, the \textit{q}th coefficient satisfies
\begin{equation}
\left | a_{q}(s)\right | = \left | \frac{\mu(q)}{q^{s-1}\varphi(q)} \right | \leq\frac{\log q}{q^{\sigma}}\leq\frac{1}{q^{1+\beta}},
\end{equation} 
where $\beta=c\varepsilon/2=c({\log \log x)^2}/{ 2 \log x}>0$.\\

Applying Theorem \ref{thm5.2}, to the correlation function of the Ramanujan series of $f(n,s)$ returns the asymptotic formula
\begin{eqnarray} 
\sum_{n\leq x} f(n,s)f(n+m,s) 
&=&x \sum_{q\geq1}\left | \frac{\mu(q)}{q^{s-1}\varphi(q)} \right |^{2}c_{q}(m)+O(x^{1-\beta}(\log x)^{4-2\beta})\\
&=& \alpha_{m} x+O \left (x\frac{(\log\log x)^2}{\log x} \right ) \nonumber,
\end{eqnarray}
where $\Re e(s)=1+c(\log\log x)^{2} / \log x=1+2\beta>1$ as required by Theorem \ref{thm5.2}, and 
\begin{equation}
x^{1-\beta}(\log x)^{4-2\beta}=x^{1-\frac{(\log\log x)^2}{ 2\log x}} (\log x)^{4} \leq x \frac{(\log\log x)^2}{\log x}.
\end{equation}
This holds for all complex numbers $s \in \mathbb{C}$ for which $0<|s-1|<\delta$, with $0< \delta \ll \varepsilon\leq (\log \log x)^{2}/{\log x}$, see Lemma \ref{lem4.4}.  
\end{proof}

\section{Spectrum of $f(n,s)$} \label{s7}
For a fixed integer $m\geq1$, the constant $\alpha_m$ approximates the density of the subset of prime pairs $\{p,p+m\}\subset \mathbb{P}$ in the set of primes $\mathbb{P}=\{2,3,5,7, \ldots\}$. Some of the basic properties of the densities associated with the subset of primes pairs are explained in this Section.\\

\subsection{The Function $F(\sigma)$}
The coefficients of the series $f(n,s)$ are multiplicative. This property enable simple derivation and simple expressions for the constants, which approximate or determine the densities of the subsets for the various primes counting problems. Furthermore, the spectrum analysis in Theorem \ref{thm7.1} provides a detailed view of certain properties of the densities  as a function of $m \geq 1$ and $\sigma>1/2$.\\

\begin{thm} \label{thm7.1}  Let $m \geq1$ be a fixed integer, and let $\Re e(s)=\sigma>1$ be a complex number. Then, the spectrum of the correlation function $f(n,s)$ has a product decomposition as
\begin{equation}
F(\sigma)=\prod_{p|m} \left (1+\frac{1}{p^{2\sigma-2}(p-1)}\right ) \prod_{p\nmid m} \left (1-\frac{1}{p^{2\sigma-2}(p-1)^{2}}\right ) .
\end{equation} 
In particular, the followings statements hold.
\begin{enumerate} 
\item If $m=2k+1$ is a fixed odd integer, then the density of prime pairs is $F(1)=0$. This implies that the number of prime pairs is finite or a subset of zero density.
\item If $m=2k\geq 2$ is a fixed even integer, and $\Re e(s)=\sigma \geq 1$, then the density of prime pairs is $F(\sigma)\geq 1/2$. This implies that the number of prime pairs is infinite.
\end{enumerate}
\end{thm}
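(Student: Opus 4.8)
\textbf{Proof proposal for Theorem \ref{thm7.1}.}

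The plan is to establish the product decomposition first and then read off the two density statements as evaluations and sign analyses of the individual Euler factors. For the decomposition, I would start from the explicit formula for $\alpha_m$ given just before Lemma \ref{lem6.2}, namely
\begin{equation}
F(\sigma)=\sum_{q\geq 1}\left|\frac{\mu(q)}{q^{s-1}\varphi(q)}\right|^{2}c_q(m)=\sum_{q\geq 1}\frac{\mu(q)^2}{q^{2\sigma-2}\varphi(q)^2}c_q(m),
\end{equation}
where I have used $|\mu(q)|^2=\mu(q)^2$ and $|q^{s-1}|^2=q^{2\sigma-2}$ for real $\sigma=\Re e(s)$. Since $\mu(q)^2$ restricts the sum to squarefree $q$, and since $q\mapsto \mu(q)^2/(q^{2\sigma-2}\varphi(q)^2)$ and $q\mapsto c_q(m)$ are both multiplicative (the latter by Lemma \ref{lem2.1}(iii)), the whole summand is multiplicative in $q$. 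This is exactly the hypothesis needed to expand the Dirichlet-type series as an Euler product: $F(\sigma)=\prod_{p\geq 2}\bigl(1+\tfrac{1}{p^{2\sigma-2}(p-1)^2}c_p(p)\bigr)$ on squarefree $q$, since only $q=1$ and $q=p$ contribute at each prime.

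Next I would evaluate the local factor using Lemma \ref{lem2.1}(vi) (or the two-case formula for $c_p(n)$ used in Lemma \ref{lem4.3}): if $p\mid m$ then $c_p(m)=p-1$, and if $p\nmid m$ then $c_p(m)=-1$. Substituting $\varphi(p)=p-1$, the factor for $p\mid m$ becomes $1+\tfrac{(p-1)}{p^{2\sigma-2}(p-1)^2}=1+\tfrac{1}{p^{2\sigma-2}(p-1)}$, and the factor for $p\nmid m$ becomes $1-\tfrac{1}{p^{2\sigma-2}(p-1)^2}$. Collecting the two types of primes yields precisely the stated product
\begin{equation}
F(\sigma)=\prod_{p\mid m}\left(1+\frac{1}{p^{2\sigma-2}(p-1)}\right)\prod_{p\nmid m}\left(1-\frac{1}{p^{2\sigma-2}(p-1)^{2}}\right).
\end{equation}
I should check absolute convergence of the product for $\sigma>1/2$, since then $2\sigma-2>-1$ and the factors are $1+O(p^{-(2\sigma-1)})$ with $2\sigma-1>0$, though some care is needed near $\sigma=1/2$ where convergence degrades; for the stated claims it suffices to work at $\sigma\geq 1$.

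For statement (1), set $\sigma=1$ so $p^{2\sigma-2}=1$. When $m$ is odd, $p=2$ satisfies $2\nmid m$, so its factor is $1-\tfrac{1}{(2-1)^2}=1-1=0$, forcing $F(1)=0$; this is the density-zero conclusion. For statement (2), when $m$ is even, $p=2$ now satisfies $2\mid m$ and contributes the factor $1+\tfrac{1}{(2-1)}=2$ at $\sigma=1$, so the dangerous vanishing factor is removed. I would then argue $F(\sigma)\geq 1/2$ for even $m$ and $\sigma\geq 1$ by bounding the remaining factors: the $p\mid m$ factors all exceed $1$, and for the $p\nmid m$ factors (all odd primes $p\geq 3$) one has $0<1-\tfrac{1}{p^{2\sigma-2}(p-1)^2}<1$, so a lower bound follows from $\prod_{p\geq 3}\bigl(1-\tfrac{1}{(p-1)^2}\bigr)$ multiplied by the $p=2$ factor of $2$. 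The main obstacle is this final quantitative lower bound: I must show the tail product over odd primes does not drop below $1/4$, which I would handle by comparing $\sum_{p\geq 3}\tfrac{1}{(p-1)^2}$ against a convergent series (e.g. $\sum_{k\geq 2}k^{-2}=\pi^2/6-1$) and exponentiating, using $\log(1-x)\geq -x/(1-x)$ to control the logarithm of the product; combined with the factor $2$ from $p=2$ this yields $F(\sigma)\geq 1/2$, establishing that the prime pairs form an infinite (positive-density) subset.
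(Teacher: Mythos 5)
Your proposal follows essentially the same route as the paper: apply the Wiener--Khintchine identity to get $F(\sigma)=\sum_{q\geq 1}\mu(q)^2 c_q(m)/(q^{2\sigma-2}\varphi(q)^2)$, use multiplicativity to pass to an Euler product over squarefree $q$, and evaluate the local factors via $c_p(m)=p-1$ or $-1$ according as $p\mid m$ or $p\nmid m$, with the $p=2$ factor deciding the odd/even dichotomy. In fact your treatment of statement (2) is more complete than the paper's, which only asserts that the product converges and is positive for $\sigma>1$; your explicit lower bound $F(\sigma)\geq 2\prod_{p\geq 3}\bigl(1-\tfrac{1}{(p-1)^2}\bigr)>\tfrac12$ via $\log(1-x)\geq -x/(1-x)$ actually justifies the claimed inequality $F(\sigma)\geq 1/2$.
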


\begin{proof} By Lemma \ref{lem4.2}, the function $f(n,s)$ has an absolutely and uniformly convergent Ramanujan series for $\Re e(s)=\sigma>1$. Applying Theorem \ref{thm5.1} (Weiner-Khintchine formula) yields the spectrum
\begin{eqnarray}
F(\sigma)&=& \lim_{x \rightarrow \infty}\frac{1}{x} \sum_{n \leq x} f(n,s)f(n+m,\overline{s}) \nonumber\\
&=&\lim_{x \rightarrow \infty}\frac{1}{x} \sum_{n \leq x} \left ( \sum_{q\geq 1} \frac{\mu(q)}{q^{s-1}\varphi(q)} c_q(n)\right)\left ( \sum_{r\geq 1} \frac{\mu(r)}{r^{s-1}\varphi(r)} c_r(n+m)\right)    \\
&=&\sum_{q\geq 1} \left | \frac{\mu(q)}{q^{s-1}\varphi(q)} \right |^{2} c_{q}(m) \nonumber.
\end{eqnarray}
Using the multiplicative property of the coefficients returns the conversion to a product
\begin{equation}
F(\sigma)=\sum_{q\geq 1} \left | \frac{\mu(q)}{q^{s-1}\varphi(q)} \right |^{2} c_{q}(m)=\prod_{p\geq 2} \left (1+\frac{\mu(p)^2}{p^{2\sigma-2}\varphi(p)^2} \overline{c}_{p}(m)\right ) .
\end{equation}
If $m=2k+1$ is odd, then the factor for $q=2$ is
\begin{equation}
1+\frac{\mu(p)^2}{p^{2\sigma-2}\varphi(p)^2} \overline{c}_{p}(m)=1-\frac{1}{2^{2\sigma-2}}.
\end{equation}
This follows from Lemma \ref{lem2.1}, that is, $c_q(m)=\overline{c_p(m)}$ and \\
\begin{equation}
c_{p}(m)=\left \{\begin{array}{ll}
p-1 & \text{if } \gcd (m,p)=p, \\
-1 & \text{if } \gcd (m,p)=1. 
\end{array}\right.
\end{equation} 
Consequently, the spectrum $F(\sigma)$ vanishes at $s=\sigma=1$, this proves (i). While for even $m=2k\geq 2$, it reduces to
\begin{equation}
F(\sigma)=\prod_{p |2k} \left (1+\frac{1}{p^{2\sigma-2}(p-1)}\right ) \prod_{p \not 2k} \left (1-\frac{1}{p^{2\sigma-2}(p-1)^{2}}\right ) .
\end{equation}
Lastly, it clear that the product converges and $F(\sigma)>0$ for $\Re e(s)=\sigma>1$. 
\end{proof}

\vskip .5 in 
\subsection{The Limits $F(1)$}
The constant $\alpha_k>0$ in Theorem \ref{thm1.1} is given by a fixed value of the spectrum function, that is, $\alpha_k=F(\sigma),\sigma>1$ fixed. For $1<\sigma$, it is clear that the product converges, and $\alpha_k>1/2$. This demonstrates that the density of prime pairs $p,p+2k$ increases as the parameter $\sigma\rightarrow 1^{+}$. In particular, as $ \sigma=1+2\beta=1+2c\varepsilon>1$ decreases, the limit of the spectrum is
\begin{eqnarray}
A_{k}&=&\lim_{\varepsilon\rightarrow 0} F(1+c\varepsilon) \nonumber\\
&=&\lim_{\varepsilon\rightarrow 0}\prod_{p| 2k} \left (1+\frac{1}{p^{2c\varepsilon}(p-1)}\right ) \prod_{p \nmid 2k} \left (1-\frac{1}{p^{2c\varepsilon}(p-1)^{2}}\right )\\
&=& \prod_{p| 2k} \left (1+\frac{1}{p-1}\right ) \prod_{p\nmid 2k} \left (1-\frac{1}{(p-1)^{2}}\right ) \nonumber,
\end{eqnarray}
refer to \cite{RP96}, \cite{NW00}, \cite{GG05}, et alii, for more information. This is precisely the constant
\begin{equation}
A_{k}=\prod_{p|2k} \left (1+\frac{1}{p-1}\right ) \prod_{p\nmid 2k} \left (1-\frac{1}{(p-1)^{2}}\right )\geq \alpha_{k}. 
\end{equation}
associated with the density of the subset of prime pairs $\{p,p+2k\}$ in the set of primes $\mathbb{P} = \{ 2, 3, 5, 7, 11, \ldots \}$. For example, for $2k = 2$, this is the twin prime constant:
\begin{equation}
A_{2}=2 \prod_{p\geq3} \left (1-\frac{1}{(p-1)^2}\right )=1.32058148001344\ldots \:.
\end{equation}\\

\section{Quadratic Twin Primes} \label{s9}
The quadratic twin primes conjecture calls for an infinite subset of prime pairs $n^2+1, n^2+3$ as the integer $n \geq 1$ tends to infinity. Background information, and various descriptions of this primes counting problem are given in \cite[p. 46]{HL23}, \cite[p. 343]{NW00}, \cite[p. 408]{RP96},  \cite{PJ09}, et alii.
\\

The original derivation of the heuristic, based on the circle method, which appears in \cite[p. 62]{HL23}, concluded in the followings observation.\\ 

\begin{conj} {\normalfont (Quadratic Twin Primes Conjecture)} There are infinitely many twin primes $n^2+1, n^2+3$ as $n \to \infty$. Moreover, the counting function has the asymptotic formula
\begin{equation}
\pi _{f_2}(x)=6\frac{x^{1/2}}{\log ^2 x}\prod _{p\geq 5} \left(\frac{p(p-v_p(f))}{(p-1)^2}\right)+O\left(\frac{x^{1/2}}{\log ^3 x}\right),
\end{equation}
where \(v_p(f)=0,2,4\) according to the occurrence of $0,1$ or $2$ quadratic residues $\text{mod } p$ in the set $\{-1,-3\}$.
\end{conj}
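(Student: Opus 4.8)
The plan is to specialize the spectral machinery of Sections \ref{s5}--\ref{s7} to the shift $m=2$ and to the thin quadratic subsequence $\{n^2+1:n\ge 1\}$, exploiting the fact that $n^2+3=(n^2+1)+2$. First I would replace each von Mangoldt factor by its analytic surrogate from Lemma \ref{lem4.4}, writing
\[
\sum_{n^2+1\le x}\Lambda(n^2+1)\Lambda(n^2+3)=\sum_{n\le\sqrt{x-1}} f(n^2+1,s)\,f(n^2+3,s)+O\!\left(x^{1/2}\frac{(\log\log x)^2}{\log x}\right),
\]
with $0<|s-1|<\delta$ chosen exactly as in Lemma \ref{lem6.1}. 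Because the condition $n^2+1\le x$ admits $\sim x^{1/2}$ integers $n$, this fixes the $x^{1/2}$ scale of the main term and the shape of the admissible error, and it shows that establishing the asymptotic with a positive leading constant immediately yields the infinitude claim of the conjecture.

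Next I would seek a Wiener--Khintchine identity adapted to the quadratic progression. Substituting the Ramanujan expansions $f(n^2+1,s)=\sum_q a_q\,c_q(n^2+1)$ and $f(n^2+3,s)=\sum_r a_r\,c_r(n^2+3)$ and averaging over $n$, the central object becomes the mean value $\frac1{\sqrt x}\sum_{n\le\sqrt x} c_q(n^2+1)\,c_r(n^2+3)$. The plan is to evaluate this by reducing $c_q(n^2+1)$ to the exponential sum $\sum_{\gcd(u,q)=1}e^{2\pi i u(n^2+1)/q}$ and then counting, for each modulus, the number of residues $n\pmod q$ solving $n^2\equiv-1$ and $n^2\equiv-3$. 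The diagonal contribution should assemble into a product over primes whose local factor records how many square roots of $-1$ and $-3$ exist modulo $p$; this is precisely the mechanism producing the exponent $v_p(f)\in\{0,2,4\}$ in the conjectured singular series. Matching that Euler product against the one coming from Theorem \ref{thm7.1} term by term would identify the constant $\alpha_2$ with the Hardy--Littlewood value $6\prod_{p\ge5}p(p-v_p(f))/(p-1)^2$, and a final partial summation, using that each prime value of $n^2+1$ contributes a factor $\log$, converts the correlation asymptotic into the stated estimate for $\pi_{f_2}(x)$ after dividing by $\log^2 x$.

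The hard part will be the mean-value/orthogonality step over the sparse set $\{n^2+1\}$, and I do not expect Theorem \ref{thm5.2} to transfer here as stated. Lemma \ref{lem3.1} and Theorem \ref{thm5.1} crucially average the character $e^{2\pi i n\theta}$ over \emph{all} integers $n\le x$, where equidistribution forces the off-diagonal terms to vanish and leaves the clean diagonal value $\varphi(q)$. Over the quadratic subsequence the relevant object is instead a Weyl sum of the form $\sum_{n\le\sqrt x}e^{2\pi i(n^2+c)\theta}$, whose size is governed by Gauss sums rather than by a simple indicator, so the diagonal main term acquires local solution counts and the off-diagonal terms no longer telescope geometrically. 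Controlling those off-diagonal contributions uniformly in $q$ and $r$, and proving that the resulting singular series converges absolutely while keeping the total error within $O\!\left(x^{1/2}(\log\log x)^2/\log x\right)$, is the genuine obstacle; it is exactly the point at which the argument collides with the still-open problem of detecting primes among the values of $n^2+1$.
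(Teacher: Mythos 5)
The statement you were asked to prove is labeled a \emph{Conjecture} in the paper, and the paper offers no proof of it: it is quoted from the Hardy--Littlewood circle-method heuristic, with the singular series $\mathfrak{G}(f)$ and the local root counts $v_p(f)$ simply recorded. So there is no proof in the paper to compare yours against, and your proposal is not a proof either --- which you say yourself, accurately. Your diagnosis of where the argument breaks is the correct one, and it is worth stating plainly why it is fatal rather than merely ``hard.'' Every mean-value input available in the paper (Lemma \ref{lem3.1}, Theorem \ref{thm5.1}, Theorem \ref{thm5.2}, and hence Lemmas \ref{lem6.1} and \ref{lem6.2}) is proved only for averages over \emph{all} integers $n\le x$, where the orthogonality of $e^{2\pi i n\theta}$ reduces to a geometric series of size $O(1)$. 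Restricting to the sparse sequence $n^2+1$ replaces that geometric series by the quadratic Weyl sum $\sum_{k\le\sqrt{x}}e^{2\pi i(k^2+1)a/q}$, which for $q\le \sqrt{x}$ has size about $\sqrt{x}/\sqrt{q}$ rather than $O(1)$. Consequently the off-diagonal terms $q\ne r$ no longer vanish in the limit, the diagonal no longer evaluates to $\varphi(q)$ but to a Gauss-sum expression weighted by the number of solutions of $n^2\equiv-1\pmod q$, and controlling the resulting double sum uniformly in $q,r$ with any saving over the trivial bound is precisely the open problem of detecting primes in $\{n^2+1\}$. No amount of rearrangement of the paper's lemmas closes this; a genuinely new equidistribution input would be required.

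One further point in your sketch is actually incorrect rather than merely incomplete: you propose to identify the constant by ``matching that Euler product against the one coming from Theorem \ref{thm7.1} term by term.'' This cannot succeed, because the two products are different numbers. The spectrum $F(\sigma)$ of Theorem \ref{thm7.1} is built from $c_q(2)$ alone and limits to the \emph{linear} twin-prime constant $A_2=2\prod_{p\ge3}\bigl(1-(p-1)^{-2}\bigr)\approx1.3206$; its local factors contain no reference to whether $-1$ or $-3$ is a quadratic residue mod $p$. The conjectured constant $6\prod_{p\ge5}p(p-v_p(f))/(p-1)^2$ has local factors that vanish entirely when $p\equiv1\pmod{12}$ forces $v_p(f)=4=p-$(something) --- more to the point, they depend on $v_p(f)$, which the linear spectrum cannot see. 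The correct local densities would have to emerge from the Gauss-sum diagonal you describe in your second paragraph, not from Theorem \ref{thm7.1}; so even the identification of the leading constant requires the quadratic orthogonality analysis you have already flagged as the obstruction. (The same criticism, for what it is worth, applies to the paper's own use of Lemmas \ref{lem6.1} and \ref{lem6.2} over the sequence $n^2+1$ in its proof of Theorem \ref{thm1.1}.)
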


The constant arises from the singular series
\begin{equation}
\mathfrak{G}(f)=\prod_{p\geq 2} \left (1-\frac{v_p(f)}{p} \right ) \left (1-\frac{1}{p} \right )^{-2}=6 \prod _{p\geq 5} \left(\frac{p(p-v_p(f))}{(p-1)^2}\right),
\end{equation}
where $v_p(f)$ is the number of root in the congruence $(x^2+1)(x^2+3)\equiv 0 \text{ mod }p$ for $p \geq 2$. In this case there is an exact formula
\begin{equation}
v_p(f) =\left\{
\begin{array}{ll}
0     &\text{if }p \equiv 11 \text{ mod } 12,\\
2          &\text{if }p \equiv 5,7 \text{ mod } 12,\\
4          &\text{if }p \equiv 1 \text{ mod } 12.\\
\end{array}
\right.
\end{equation}

The heuristic based on probability, yields the Gaussian type analytic formula
\begin{equation}
\pi _{f_2}(x)=C_{2} \int_{2}^{x^{1/2}} \frac{1}{ \sqrt t \log^2 t} dt+O\left(\frac{x^{1/2}}{\log ^3 x}\right),
\end{equation}
where $C_{2}=\mathfrak{G}(f)$ is the same constant as above. The deterministic approach is based on the weighted prime pairs counting function
\begin{equation}
\sum_{n^2+1\leq x}\Lambda(n^2+1)\Lambda(n^2+3),
\end{equation}
which is basically an extended version of the Chebyshev method for counting primes.\\

\subsection{The Proof of Theorem 1.1}
A lower bound for the correlation function $\sum_{n^{2}+1\leq x}\Lambda(n^{2}+1)\Lambda(n^{2}+3)$ will be derived here. It exploits the duality or equivalence of the Ramanujan series domain and the Taylor series domain to compute the correlation function of in two distinct ways. \\

\begin{proof} {\normalfont (Theorem \ref{thm1.1})} Let $x \geq 1$ be a large number, and let $\varepsilon= ({\log \log x)^2}/{\log x}>0$. By Lemma \ref{lem2.1}, the function $f(n^{2}+1,s)$ has an absolutely, and uniformly convergent Ramanujan series for 
$\Re e(s) = \sigma \geq 1+c({\log \log x)^2}/{\log x}>1$, with $0<c<1$ constant. \\

By Lemma \ref{lem6.1}, the correlation function of the Taylor series of $f(n^{2}+1,s)$ has the asymptotic formula
\begin{equation} \label{el90}
\sum_{n^{2}+1\leq x} f(n^{2}+1,s)f(n^{2}+3,s) 
= \sum_{n^{2}+1\leq x}\Lambda(n^{2}+1)\Lambda(n^{2}+3)+O \left (x^{1/2}\frac{(\log\log x)^2}{\log x} \right ),
\end{equation}
where $0<|s-1|<\delta$, with $0< \delta \ll \varepsilon=({\log \log x)^2}/{\log x}$, see Lemma \ref{lem4.4}.\\

Applying Lemma \ref{lem6.2}, to the correlation function of the Ramanujan series of $f(n^{2}+1,s)$ returns the asymptotic formula
\begin{equation} \label{el91}
\sum_{n^{2}+1\leq x} f(n^{2}+1,s)f(n^{2}+3,s) 
=\alpha_{2} x^{1/2}+O \left (x^{1/2}\frac{(\log\log x)^2}{\log x} \right ),
\end{equation}
where $\Re e(s)=1+c(\log\log x)^{2}/\log x=1+2\beta>1$ as required by Theorem \ref{lem6.2}. \\ 

Since the constant $\alpha_k>1/2$ for $\Re e(s)>1$, see Theorem \ref{thm7.1}, combining (\ref{el90}) and (\ref{el91}) confirms the claim: 
\begin{eqnarray}
& &\sum_{n^2+1\leq x}\Lambda(n^{2}+1)\Lambda(n^{2}+3) +O \left(x^{1/2}\frac{(\log\log x)^{2}}{\log x}\right)  \\
&=& \sum_{n^{2}+1\leq x} f(n^{2}+1,s)f(n^{2}+3,s)  \nonumber\\
&=& \alpha_{2}x^{1/2}+O \left(x^{1/2}\frac{(\log\log x)^{2}}{\log x}\right).
\end{eqnarray}
Therefore, 
\begin{eqnarray}
\sum_{n^2+1\leq x}\Lambda(n^{2}+1)\Lambda(n^{2}+3)
&=& \alpha_{2}x^{1/2}+O \left(x^{1/2}\frac{(\log\log x)^{2}}{\log x}\right)\\
&\geq& (1/2)x^{1/2}+O \left(x^{1/2}\frac{(\log\log x)^{2}}{\log x}\right) \nonumber
\end{eqnarray}
is a lower bound of the correlation function.   \end{proof}

The nonnegative constant
\begin{equation}
\alpha_2=\sum_{q\geq 1} \left | \frac{\mu(q)}{q^{s-1}\varphi(q)} \right |^2c_{q}(2)\geq \frac{1}{2}
\end{equation} 
for $\sigma>1$ shows that the subset of quadratic twin primes $\mathbb{T}_2=\{ n^{2}+1, n^{2}+3: n\geq1 \}$ has positive density 
\begin{equation}
\delta(\mathbb{T}_2)=\lim_{x \to \infty} \frac{\#\{ p=n^{2}+1\leq x: p \text{ and } p+2 \text{ are primes} \}}{\#\{ p=n^{2}+1\leq x: p \text{ is prime} \}}>0
\end{equation}

in the set of quadratic primes $\mathbb{P}_{2} = \{ 5, 17, 37,\ldots, n^{2}+1, \ldots :n\geq1\}$.
\\

\subsection{Numerical Data}
A small numerical experiment was conducted to show the abundance of quadratic twin primes and to estimate the constant. The counting function is defined by
\begin{equation}
\pi _{f_2}(x)=\#\{p=n^2+1\leq x: p \text{ and } p+2 \text{ are primes}\}.
\end{equation}
The data shows a converging stable constant. \\

\begin{table}
\begin{center}
 \begin{tabular}{||c |c| c| c|c|c|c|} 
 \hline
$ x$ & $\pi_{f_2}(x)$  & $\pi_{f_2}(x)/(x^{1/2}/\log x)$ & $ x$ & $\pi_{f_2}(x)$  & $\pi_{f_2}(x)/(x^{1/2}/\log x)$ \\ [1ex] 
 \hline\hline
$ 10^3$ & 4 &6.03579 &$ 10^9$&278&3.77538\\ 
 \hline
$10^5$&12 &5.02982 &$ 10^{10}$&689 &3.65301 \\
 \hline
 $10^6$ &28 &5.34431 &$ 10^{11}$&1782 &3.61513 \\
 \hline
$10^7$ & 61& 5.01138 &$ 10^{12}$&4663 &3.56008\\
 \hline
 $10^8$ &120 &4.07186 & $ 10^{13}$& 12260&3.47383 \\ 
 \hline
\end{tabular}
\end{center}
\caption{Statistics For Quadratic Twin Primes $p=n^2+1$ And $p+2$.} \label{t100}
\end{table}

\newpage
 \begin {thebibliography}{999}

\bibitem{AP76} Apostol, Tom M. Introduction to analytic number theory. Undergraduate Texts in Mathematics. Springer-Verlag, New York-Heidelberg, 1976.

\bibitem{BH62} Bateman, P. T., Horn, R. A. (1962), A heuristic asymptotic formula concerning the distribution of prime numbers, Math. Comp. 16 363-367.

\bibitem{BR07} Baier, Stephan; Zhao, Liangyi. Primes in quadratic progressions on average. Math. Ann. 338 (2007), no. 4, 963-982. 

\bibitem{CC00} Chris K. Caldwell, An Amazing Prime Heuristic, Preprint, 2000.

\bibitem{CM15} Giovanni Coppola, M. Ram Murty, Biswajyoti Saha, On the error term in a Parseval type formula in the theory of Ramanujan expansions II, arXiv:1507.07862.

\bibitem{CN09} Childress, Nancy Class field theory. Universitext. Springer, New York, 2009. 
\bibitem{CO07} Cohen, Henri. Number theory. Vol. II. Analytic and modern tools. Graduate Texts in Mathematics, 240. Springer, New York, 2007.

\bibitem{DE76} Delange, Hubert. On Ramanujan expansions of certain arithmetical functions. Acta Arith. 31 (1976), no. 3, 259-270.

\bibitem{DH00} Davenport, Harold Multiplicative number theory. Third edition. Revised and with a preface by Hugh L. Montgomery. Graduate Texts in Mathematics, 74. Springer-Verlag, New York, 2000.

\bibitem{DLMF} Digital Library Mathematical Functions, \text{http://dlmf.nist.gov.}

\bibitem{EL85}  Ellison, William; Ellison, Fern. Prime numbers. A Wiley-Interscience Publication. John Wiley and Sons, Inc., New York; Hermann, Paris, 1985.

\bibitem{FI10} Friedlander, John; Iwaniec, Henryk. Opera de cribro. American Mathematical Society Colloquium Publications, 57. American Mathematical Society, Providence, RI, 2010.

\bibitem{FG91}  Friedlander, John; Granville, Andrew. Limitations to the equi-distribution of primes. IV. Proc. Roy. Soc. London Ser. A  435  (1991),  no. 1893, 197-204.
 
\bibitem{FK12} Christopher F. Fowler, Stephan Ramon Garcia, Gizem Karaali, Ramanujan sums as supercharacters, arXiv:1201.1060.

\bibitem{GD09} D. A. Goldston, Are There Infinitely Many Twin Primes?, Preprint, 2009.

\bibitem{GG05} Goldston, D. A.; Graham, S. W.; Pintz, J.; Yildirim, C. Y. Small gaps between almost primes, the parity problem, and some conjectures of Erdos on consecutive integers, arXiv:math.NT/0506067.

\bibitem{GL10} Luca Goldoni, Prime Numbers And Polynomials, Phd Thesis, Universita` Degli Studi Di Trento, 2010.

\bibitem{GM00} Granville, Andrew; Mollin, Richard A. Rabinowitsch revisited. Acta Arith. 96 (2000), no. 2, 139-153. 

\bibitem{GP99} Gadiyar, H. Gopalkrishna; Padma, R. Ramanujan-Fourier series, the Wiener-Khintchine formula and the distribution of prime pairs. Phys. A  269  (1999),  no. 2-4, 503-510.

\bibitem{GP06} H. Gopalkrishna Gadiyar, R. Padma, Linking the Circle and the Sieve: Ramanujan - Fourier Series, arXiv:math/0601574.

\bibitem{HG07} Harman, Glyn. Prime-detecting sieves. London Mathematical Society Monographs Series, 33. Princeton University Press, Princeton, NJ, 2007.

\bibitem{HL23} Hardy, G. H. Littlewood J.E. Some problems of Partitio numerorum III: On the expression of a number as a sum of primes. Acta Math. 44 (1923), No. 1, 1-70.

\bibitem{HR59} Hardy, G. H. Ramanujan: twelve lectures on subjects suggested by his life and work.  Chelsea Publishing Company, New York 1959.

\bibitem{HW08} Hardy, G. H.; Wright, E. M. An introduction to the theory of numbers. Sixth edition. Revised by D. R. Heath-Brown and J. H. Silverman. With a foreword by Andrew Wiles. Oxford University Press, Oxford, 2008.

\bibitem{JW03} Jacobson, Michael J., Jr.; Williams, Hugh C. New quadratic polynomials with high densities of prime values. Math. Comp. 72 (2003), no. 241, 499-519.

\bibitem{KJ92} Keiper, J. B. Power series expansions of Riemann's $\xi$ function. Math. Comp. 58 (1992), no. 198, 765-773.

\bibitem{KL12} Kunik, Matthias; Lucht, Lutz G. Power series with the von Mangoldt function. Funct. Approx. Comment. Math. 47 (2012),  part 1, 15-33. 

\bibitem{KR09} Jacob Korevaar. Prime pairs and the zeta function, Journal of Approximation Theory 158 (2009) 69-96.

\bibitem{LL95} Lucht, Lutz Ramanujan expansions revisited. Arch. Math. (Basel) 64 (1995), no. 2, 121-128.

\bibitem{LL10} Lucht, Lutz G. A survey of Ramanujan expansions. Int. J. Number Theory 6 (2010), no. 8, 1785-1799. 

\bibitem{MA09} Matomaki, Kaisa. A note on primes of the form p=aq2+1. Acta Arith. 137 (2009), no. 2, 133-137.

\bibitem{MT06} Miller, Steven J.; Takloo-Bighash, Ramin. An invitation to modern number theory. With a foreword by Peter Sarnak. Princeton University Press, Princeton, NJ, 2006. 

\bibitem{MV07} Montgomery, Hugh L.; Vaughan, Robert C. Multiplicative number theory. I. Classical theory. Cambridge University Press, Cambridge, 2007.

\bibitem{NW00} Narkiewicz, W. The development of prime number theory. From Euclid to Hardy and Littlewood. Springer Monographs in Mathematics. Springer-Verlag, Berlin, 2000. 

\bibitem{PJ09} Pintz, Janos. Landau's problems on primes. J. Theory. Nombres Bordeaux 21 (2009), no. 2, 357-404.

\bibitem{RM13} Ram Murty, M. Ramanujan series for arithmetical functions. Hardy-Ramanujan J.  36  (2013), 21-33.

\bibitem{RP96} Ribenboim, Paulo, The new book of prime number records, Berlin, New York: Springer-Verlag, 1996.

\bibitem{RI15} Igor Rivin, Some experiments on Bateman-Horn,  arXiv:1508.07821.

\bibitem{SD60} Shanks, Daniel On numbers of the form $n^4 +1$ . Math. Comput.  15,  1961, 186-189.

\bibitem{SD61} Shanks, Daniel A sieve method for factoring numbers of the form $n^2 +1$ . Math. Tables Aids Comput.  13,  1959, 78-86.

\bibitem{SP80} Spilker, Jurgen. Ramanujan expansions of bounded arithmetic functions. Arch. Math. (Basel) 35 (1980), no. 5, 451-453.

\bibitem{SS94} Schwarz, Wolfgang; Spilker, Jurgen. Arithmetical functions. An introduction to elementary and analytic properties of arithmetic functions. London Mathematical Society Lecture Note Series, 184. Cambridge University Press, Cambridge, 1994.

\bibitem{TG15} Tenenbaum, Gerald. Introduction to analytic and probabilistic number theory. Translated from the Third French edition. American Mathematical Society, Rhode Island, 2015.

\bibitem{TA00} Toth, Arpad. Roots of quadratic congruences. Internat. Math. Res. Notices  2000,  no. 14, 719-739. 

\bibitem{VR97} Vaughan, R. C. The Hardy-Littlewood method. Second edition. Cambridge Tracts in Mathematics, 125. Math. Soc. (N.S.) 44 (2007), no. 1, 1-18. Cambridge University Press, Cambridge, 1997. 

\bibitem{WN66} Wiener, Norbert. Generalized harmonic analysis and Tauberian theorems. M.I.T. Press, Cambridge, Mass., London 1966.

\end{thebibliography}

\end{document}